\documentclass[letterpaper, 10 pt, conference]{ieeeconf}
\usepackage{mymacros}


\pdfobjcompresslevel=0
\IEEEoverridecommandlockouts
\overrideIEEEmargins
\usepackage[left=1.9cm,right=1.9cm,top=1.9cm,bottom=1.9cm]{geometry}

\IEEEoverridecommandlockouts
\overrideIEEEmargins

\title{\LARGE \bf
Automated Performance Estimation for Decentralized Optimization via Network Size Independent Problems
}

\author{Sebastien Colla and Julien M. Hendrickx
\thanks{S. Colla  and  J.  M.  Hendrickx  are  with  the  ICTEAM institute, UCLouvain, Louvain-la-Neuve, Belgium. S. Colla is supported by the French Community of Belgium through a FRIA fellowship (F.R.S.-FNRS). J. M. Hendrickx is supported by the “RevealFlight” Concerted Research Action (ARC) of the Federation Wallonie-Bruxelles and by the F.R.S.-FNRS via its research project Kornet and its Incentive Grant for Scientific Research (MIS) “Learning from Pairwise Comparisons”. Email addresses: {\tt\small sebastien.colla@uclouvain.be}, {\tt\small julien.hendrickx@uclouvain.be}}
}

\begin{document}

\maketitle
\thispagestyle{empty}
\pagestyle{empty}

\begin{abstract}
  We develop a novel formulation of the Performance Estimation Problem (PEP) for decentralized optimization whose size is independent of the number of agents in the network. The PEP approach allows computing automatically the worst-case performance and worst-case instance of first-order optimization methods by solving an SDP. Unlike previous work, the size of our new PEP formulation is independent of the network size. For this purpose, we take a global view of the decentralized problem and we also decouple the consensus subspace and its orthogonal complement. We apply our methodology to different decentralized methods such as DGD, DIGing and EXTRA and obtain numerically tight performance guarantees that are valid for any network size.
\end{abstract}

\section{Introduction}
Decentralized optimization has received an increasing attention due to its useful applications in large-scale machine learning and sensor networks, see e.g. \cite{DGD} for a survey. In decentralized methods for separable objective functions, we consider a set of agents $\{1,\dots,N\}$, working together to solve the following optimization problem: \vspace{-1mm}
\begin{equation} \label{opt:dec_prob}
  \underset{\text{\normalsize $x \in \Rvec{d}$}}{\mathrm{min}} \quad f(x) = \frac{1}{N}\sum_{i=1}^N f_i(x), \vspace{-1mm}
\end{equation}
where $f_i: \Rvec{d}\to\mathbb{R}$ is the private function locally held by agent $i$.
To achieve this goal, each agent $i$ holds its own version $x_i$ of the decision variable $x \in \Rvec{d}$. Agents perform local computations and exchange local information with their neighbors to come to an agreement on the minimizer $x^*$ of the global function $f$.
The problem \eqref{opt:dec_prob} can thus be written using the separable function $F_s:\Rvec{Nd}\to\Rvec{}$: \vspace{-1mm}
\begin{align} \label{opt:dec_prob2}
  \underset{\text{\small $x_1\dots,x_N \in \Rvec{d}$}}{\mathrm{min}} &\quad F_s(x_1,\dots,x_N) = \frac{1}{N}\sum_{i=1}^N f_i(x_i), \hspace{5mm}\\[-0.2mm]
  & \text{such that } \quad x_1 = \dots = x_N. \\[-6mm]
\end{align}
Exchanges of information often take the form of an average consensus step on some quantity, e.g.,  on the $x_i$. This corresponds to a multiplication by an averaging matrix $W \in \Rmat{N}{N}$, often assumed symmetric and \emph{doubly stochastic}, i.e., a nonnegative matrix whose rows and columns sum to one. This matrix $W$ indicates both the topology of the network of agents and the weights they are using during the average consensus. Therefore we call it network or averaging matrix, without distinction.

One of the simplest decentralized optimization method is the distributed (sub)gradient descent (DGD) \cite{DsubGD} where agents successively perform an average consensus step \eqref{eq:DGD_cons} and a local gradient step \eqref{eq:DGD_comp}: \vspace{-1mm}
\begin{align}
  y_i^k &= \sum_{j=1}^N w_{ij} x_j^k,           \hspace*{15mm} \label{eq:DGD_cons} \\[-0.5mm]
  x_i^{k+1} &= y_i^k - \alpha^k \nabla f_i(x_i^k), \hspace*{15mm} \label{eq:DGD_comp} \vspace{-0.5mm}
\end{align}
for step-sizes $\alpha^k > 0$.
We focus on decentralized methods where the only agent-specific part in their update is the consensus step, in which the agents use different averaging weights. This is the case of any method that combines consensus steps, e.g. \eqref{eq:DGD_cons}, and calls to first-order oracles of local functions for implicit or explicit updates, e.g. \eqref{eq:DGD_comp}, using identical step-sizes and parameters for all the agents.
This class of decentralized methods, denoted $\M$, includes many algorithms such as DGD \cite{DsubGD}, EXTRA \cite{EXTRA}, DIGing \cite{DIGing}, and NIDS \cite{NIDS}, to name a few.

In general, the quality of an optimization method is evaluated \textit{via} a worst-case guarantee. Having accurate performance bounds for a decentralized algorithm is important to correctly understand the impact of its parameters and the network topology on its performance. However, obtaining these guarantees can often be a challenging task, requiring combining the impact of the optimization component and of the interconnection network, sometimes resulting in bounds that are conservative or very complex.

\subsection{Contributions and Previous works}
In this work, we build on a computational approach that computes worst-case performance guarantee of an algorithm by solving an optimization problem, known as the performance estimation problem (PEP), see Section \ref{sec:PEPcen} for more details.
The PEP approach has led to many results in centralized optimization, see e.g. \cite{PEP_Smooth,PEP_composite,Taylor_thesis}, and we have recently proposed a formulation tailored for decentralized optimization \cite{PEP_dec_CDC, PEP_dec} using a spectral description for symmetric and \emph{generalized}\footnote{A generalized doubly stochastic matrix has rows and columns that sum to one, without the need to be non-negative, see \cite[Definition 2]{PEP_dec}.} doubly stochastic averaging matrices, leading to some interesting results and observations. This formulation considers each agent individually in the PEP, implying that its size grows with the number of agents $N$. However, we observed in all our experiments \cite[Section IV]{PEP_dec} that the results are independent of $N$ for classical performance criteria.
In this paper, we develop a new PEP formulation for decentralized optimization whose problem size is independent of $N$, though $N$ could still appear as a scaling coefficient in some cases,  see Section \ref{sec:new_form_indep_N}.
This PEP formulation takes a global view on the decentralized problem \eqref{opt:dec_prob2}, by neglecting the separability of function $F_s$, which a priori corresponds to a relaxation. Then,
we apply a change of variables to distinguish the consensus and orthogonal parts of a vector.
In Section \ref{sec:numres}, we demonstrate on several examples (DGD, DIGing and EXTRA), and observe empirically the tightness of this new agent-independent PEP formulation with symmetric and generalized doubly stochastic averaging matrices.
We also observe that EXTRA seems to perform better than DIGing for time-constant averaging matrices but is not robust to time-varying matrices.

\subsection{Related work}
An alternative approach with similar motivations for automated performance evaluation is proposed in \cite{IQC} and is inspired by dynamical systems concepts. Integral quadratic constraints (IQC), usually used to obtain stability guarantees on complex dynamical systems, are adapted to obtain sufficient conditions for the convergence of optimization algorithms
and deduce numerical bounds on the convergence rates.
It has been applied to decentralized optimization in \cite{IQC_dec}.
This methodology analyzes the convergence rate of a single iterate, which is beneficial for the problem size that remains small (and that is also independent of the number of agents in the decentralized case). However, this does not allow to deal with non-geometric convergences, e.g. on smooth convex functions, nor to analyze cases with properties that apply over several iterations, e.g. when averaging matrices are constant.

These are possible with the PEP approach which computes the worst-case performance on a given number $K$ of iterations, but solving problems whose size grows with $K$.

\section{The general PEP approach} \label{sec:PEPcen}
In principle, a tight performance bound on an algorithm could be obtained by running it on every single instance - function and initial condition - allowed by the setting considered and selecting the worst performance obtained. This would also directly provide an example of “worst” instance if it exists.
The performance estimation problem (PEP) formulates this abstract idea as a real optimization problem that maximizes the error measure of the algorithm result, over all possible functions and initial conditions allowed \cite{PEP_Drori}.
This optimization problem is inherently infinite-dimensional, as it contains a continuous function among its variables. Nevertheless, Taylor et al. have shown \cite{PEP_Smooth,PEP_composite} that PEP can be solved exactly using an SDP formulation, for a wide class of centralized first-order algorithms and different classes of functions.

The main ingredients of a PEP are: (i) a performance measure $\mc{P}$, e.g. $f(x^k)-f(x^*)$; (ii) a class of functions $\mc{F}$, e.g. the class $\mc{F}_{\mu,L}$ of $\mu$-strongly convex and $L$-smooth functions; (iii) an optimization method $\mc{M}$; (iv) a set of optimal solutions $\mc{X}^*$; (v) a set $\mc{I}^0$ of initial conditions, e.g. $\|x^0-x^*\|^2 \le 1$.
These ingredients are organized into an optimization problem as \vspace{-1mm}
\begin{align}
  \underset{f, x^0,\dots, x^K, x^*}{\sup} \quad & \mathrm{\mc{P}}\qty(f, x^0,\dots,x^K, x^*) \hspace*{-4mm}  \label{eq:gen_PEP}\\[-0.3mm]
 \text{s.t.} \hspace{8mm}       & \hspace{-5mm}    f \in \mc{F}, \hspace{8mm}
                                 x^* \in \mc{X}^*, \\[-0.6mm]
                                & \hspace{-5mm} x^k \text{ are iterates from method $\mc{M}$,} \\[-0.6mm]
                                & \hspace{-5mm} \mc{I}^0 ~\text{ holds.} \\[-6mm]
\end{align}
To overcome the infinite dimension of variable $f$, the problem can be discretized into $\{\qty(x^k,g^k,f^k)\}_{k\in I}$, where $g^k$ and $f^k$ are respectively the (sub)gradient and the function value of $f$ at point $k$ and $I = \{0,\dots,K,*\}$. Then, the constraint $f \in \mc{F}$ is replaced by interpolation conditions ensuring that there is a function of class $\mc{F}$ which interpolates those data points $\{\qty(x^k,g^k,f^k)\}_{k \in I}$.
Such constraints are provided for many different classes of functions in \cite[Section 3]{PEP_composite}. They are generally quadratics and potentially non-convex in the iterates and the gradients vectors, but they are linear in the scalar products of these and in the function values. The same holds true for most classical performance criteria and initial conditions.
We can then consider these scalar products directly as decision variables of the PEP. For this purpose, we define a Gram matrix $G$ that contains scalar products between iterates $x^k \in \Rvec{d}$ and subgradients $g^k \in \Rvec{d}$. \vspace{-0.25mm}
\begin{align}
  G = P^TP, \text{ with } P = \qty[g^0\dots g^K g^* x^0\dots x^K x^*]. \\[-5.5mm]
\end{align}
By definition, $G$ is symmetric and positive semidefinite. Therefore, under some conditions described below, a PEP in the form of \eqref{eq:gen_PEP} can be formulated as an equivalent positive semidefinite program (SDP) using the Gram matrix $G$ and the vector of functional values $f_v = [f_i]_{i \in I}$ as variables.
This SDP formulation is convenient because it can be solved numerically to global optimality and it provides the worst-case solution over all possible problem dimensions\footnote{Indeed the dimension $d$ of vectors $x^k$ and $g^k$ disappears when taking their scalar products}.
See \cite{PEP_composite} for details about the SDP formulation of PEP, including ways of reducing the size of matrix $G$. However, the dimension of $G$ always depends on the number of iterations $K$.
From a solution $G$, $f_v$ of the SDP formulation, we can construct a solution for the discretized variables $\{\qty(x^k,g^k,f^k)\}_{k\in I}$, e.g. using the Cholesky decomposition. Since these points satisfy sufficient interpolation constraints, we can also construct a function from $\mc{F}$ interpolating these points.

Proposition \ref{prop:GramPEP} states sufficient conditions under which a PEP in the form of \eqref{eq:gen_PEP} can be formulated as an SDP. The proposition uses the following definition.
\begin{definition}[Gram-representable] \label{def:Gram}
Consider a Gram matrix $G$ and a vector $f_v$, as defined above. We say that
a constraint or an objective is linearly (resp. LMI) Gram-representable if it can be expressed using a finite set of linear (resp. LMI) constraints involving (part of) $G$ and $f_v$.
\end{definition} \smallskip
\begin{proposition}[\hspace{-0.5pt}{{\cite[Proposition 2.6]{PEP_composite}}}] \label{prop:GramPEP}
  If the interpolation constraints of the class of functions $\mc{F}$, the satisfaction of the method $\mc{M}$, the performance measure $\mc{P}$ and the set of constraints $\mc{I}$, which includes the initial conditions, are linearly (or LMI) Gram-representable, then, computing the worst-case for criterion $\mc{P}$ of method $\mc{M}$ after $K$ iterations on objective functions in class $\mc{F}$ with constraints $\mc{I}$ can be formulated as an SDP, with $G \succeq 0$ and $f_v$ as variables.
\end{proposition}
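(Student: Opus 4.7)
The plan is to transform the abstract PEP \eqref{eq:gen_PEP} into an SDP through two successive reformulations: first discretize the infinite-dimensional function variable into finitely many data triples, then replace the remaining vector variables by their pairwise scalar products stored in the Gram matrix $G$.

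First I would rewrite \eqref{eq:gen_PEP} over the discretized variables $\{(x^k,g^k,f^k)\}_{k\in I}$ together with $x^*$. The constraint $f\in\mc{F}$ is replaced by the interpolation conditions of class $\mc{F}$, which by the cited results from \cite{PEP_composite} guarantee the existence of some $\bar f\in\mc{F}$ matching the triples; the constraint that the $x^k$ follow method $\mc{M}$ becomes relations between the $x^k$'s and the $g^k$'s; the performance measure $\mc{P}$ and the set $\mc{I}$ are carried over unchanged. This step does not change the supremum value of the problem because the only use of $f$ in \eqref{eq:gen_PEP} is through its iterates, gradients, and function values at the finitely many points in $I$.

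Next I would introduce the Gram matrix $G=P^TP$ with $P=[g^0\dots g^K g^*\ x^0\dots x^K x^*]$ and the vector $f_v=[f^i]_{i\in I}$, and argue the equivalence of the discretized problem with an optimization over $(G,f_v)$ with $G\succeq 0$. The forward direction is immediate: any feasible discretized instance yields a feasible $(G,f_v)$ with $G\succeq 0$ by construction. For the converse, given any $G\succeq 0$ of size $2(K+2)$, a Cholesky (or eigenvalue) factorization produces a matrix $P$ of dimension $d\le 2(K+2)$ such that $G=P^TP$, from which one reads off vectors $x^k,g^k\in\mathbb{R}^d$ with the prescribed scalar products. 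Combined with $f_v$, this gives discretized data to which the interpolation theorem can be applied to recover a genuine $\bar f\in\mc{F}$. Crucially, the ambient dimension $d$ is not fixed a priori in \eqref{eq:gen_PEP}, so we are free to take it equal to the rank of $G$.

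Finally, I would invoke the Gram-representability hypothesis to turn this reformulation into an SDP. By assumption, each of the interpolation constraints, the method-satisfaction relations, the performance measure, and the set $\mc{I}$ can be written as a finite collection of linear or LMI constraints in the entries of $G$ and $f_v$. Adjoining these to $G\succeq 0$ produces an SDP whose optimal value equals the supremum in \eqref{eq:gen_PEP}, and from whose optimizer a worst-case instance can be reconstructed as described above. The main subtlety, and the only nontrivial point in the argument, is the converse passage from $(G,f_v)$ back to a bona fide function in $\mc{F}$: it relies both on the freedom to choose $d$ and on the fact that the interpolation conditions in \cite{PEP_composite} are not only necessary but sufficient, so that any $(G,f_v)$ satisfying them is actually realized by some $\bar f\in\mc{F}$.
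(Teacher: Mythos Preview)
Your argument is correct and is essentially the standard proof from \cite{PEP_composite}. Note, however, that the present paper does not give its own proof of this proposition: it is stated as a citation of \cite[Proposition 2.6]{PEP_composite}, followed only by the remark that the extension from linearly Gram-representable to LMI Gram-representable constraints is direct. So there is nothing in the paper to compare your proof against; what you wrote is precisely the two-step reduction (discretization of $f$, then lifting to the Gram matrix with the Cholesky-based converse) that underlies the cited result, and you correctly identify the only nontrivial point, namely the sufficiency of the interpolation conditions combined with the freedom to choose the ambient dimension $d$.
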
 \smallskip
\emph{Remark:}
In \cite{PEP_composite}, Proposition \ref{prop:GramPEP} was only formulated for linearly Gram-representable constraints, but its extension to LMI Gram-representable constraints is direct. Such constraints appear in the analysis of consensus steps with spectral classes of network matrices.

PEP techniques allowed answering several important questions in optimization, see e.g. the list in \cite{Taylor_thesis}, and to make important progress in the tuning of certain algorithms including the well-known centralized gradient-descent.
It was further exploited to design optimal first-order methods: OGM for smooth convex optimization \cite{OGM} and its extension ITEM for smooth strongly convex optimization \cite{ITEM}.
It can also be used to deduce proofs about the performance of the algorithms \cite{PEP_compo}. It has been made widely accessible \textit{via} a Matlab \cite{PESTO} and Python \cite{pepit2022} toolbox.

\section{Agent-independent PEP formulation for decentralized optimization} \label{sec:new_form_indep_N}

In previous work \cite{PEP_dec}, we find worst-case performance of decentralized optimization algorithms by formulating a performance estimation problem (PEP) that considers individually each agent, with its own local function and local variables.
Remembering that the size of the SDP PEP depends on the number of variables considered and not on their dimension, we will obtain a agent-independent representation by abstracting the variables and functions at the global level. For this, we consider a generalization of problem \eqref{opt:dec_prob2} in which each $f_i$ is allowed to depend on all $x_j$. The problem becomes then one of optimizing a function $F:\Rvec{Nd}\to\R$ over $\x \in \Rvec{Nd}$, under the constraint that $\x$ belongs to the consensus subspace (all $x_i \in \Rvec{d}$ are equal). This will admit a simple representation that only requires decomposing $\x$ into two sets of components, respectively along the consensus subspace and along its orthogonal complement, as opposed to the $N$ sets of components along each $x_i$ in \cite{PEP_dec}.

\subsection{Class of algorithms and vector notations} \label{sec:algo}
We consider the class $\M$ of all the decentralized methods whose update may combine the 3 following types of operations:

(i) \emph{Gradient:}
 Each agent evaluates the (sub)gradient of its local function \vspace{-2.5mm}
 $$ g_i = \nabla f_i(x_i) \qquad \text{for $i=1,\dots,N$}. \vspace{-1.5mm}$$
 Each agent can possibly hold different local functions, which can be involved in different gradient operations.

(ii) \emph{Consensus:}
  All the agents perform a consensus step on any of their local variables \vspace{-1.5mm}
  \begin{equation}
    y_i = \sum_{j=1}^N w_{ij} x_j, \quad \text{for $i=1,\dots,N$.} \vspace{-2mm} \label{eq:cons_step}
  \end{equation}
  with weights $w_{ij}$ given by an averaging matrix $W$.
  Different consensus steps can possibly use the same averaging matrix.

(iii) \emph{Combinations:}
  Each agent performs the same linear combination of its local variables.

These three operations also allow implicit updates, e.g. updates where the point at which the gradient is evaluated is not explicitly known.
These operations can always be expressed using the vector notation that stacks vertically the agents local variables $x_i \in \Rvec{d}$ and their local (sub)gradients $\nabla f_i (x_i) \in \Rvec{d}$ into vectors $\x,~ \mathbf{g} \in \Rvec{Nd}$ \vspace{-1mm}
\begin{equation} \label{eq:vec_not}
  \x = \begin{bmatrix} x_1 \\\vdots\\ x_N \end{bmatrix}, \qquad \mathbf{g} = \begin{bmatrix} \nabla f_1(x_1) \\\vdots\\ \nabla f_N(x_N) \end{bmatrix}.
\end{equation}
In particular, the consensus step \eqref{eq:cons_step}
can be written for all agents at once as \vspace{-2mm}
\begin{equation}
  \y = (W \otimes I_d) \x, \label{eq:cons} \vspace{-1.5mm}
\end{equation}
where $W \in \Rmat{N}{N}$ is the averaging matrix used in the consensus, which is assumed to be symmetric and generalized doubly stochastic,  $I_d$ denotes the identity matrix of size $d$ and $\otimes$ the Kronecker product. This notation means that we apply the same matrix $W$ for the $d$ dimensions of the agents variables.

\subsection{Generalization of the decentralized problem} \label{sec:gen_prob}
We first define the consensus subspace of $\Rvec{Nd}$.
\begin{definition}[Consensus subspace] \label{def:cons_space}
  The consensus subspace of $\Rvec{Nd}$ is denoted $\C$ and is defined as \vspace{-1mm}
  $$ \C = \left\{ \x \in \Rvec{Nd} ~|~ x_1 = \dots = x_N \in \Rvec{d} \right\}. \vspace{-1mm}$$
  An orthonormal basis of $\C$ is given by $Q_{\paral} = \frac{1}{\sqrt{N}}\qty(\mathbf{1}_N \otimes I_d )$. The dimension of $\C$ is thus $d$.
  The orthogonal complement of $\C$ is denoted $\Cp$ and has dimension $(N-1)d$: \vspace{-1mm}
  $$ \Cp = \left\{ \x \in \Rvec{Nd} ~|~ \x^T\y = 0, \text{ for all } \y \in \C  \right\}.$$
\end{definition} \smallskip
Using this definition, as well as the vector notations, we consider the following optimization problem to generalize the decentralized problem \eqref{opt:dec_prob2} \vspace{-1mm}
\begin{align} \label{opt:gen_prob}
  \underset{\text{\normalsize $\x \in \Rvec{Nd}$}}{\mathrm{min}} &\quad F(\x), \\
  & \hspace{-10mm}\text{such that } \x \in \C, \\[-6mm]
\end{align}
where $F:\Rvec{Nd}\to\Rvec{}$ is a real function from a given class of functions $\mc{F}$.
The class of problems of the form of \eqref{opt:dec_prob2} can be directly related to the class of problems of the form of \eqref{opt:gen_prob} with the additional assumption that $F$ is a separable function.
Let us then define the two classes of functions we are using in this paper, and their corresponding class of separable functions.
The results can be extended to the other classes of functions that are Gram-representable. In the following, we denote by $\Ev$ a finite-dimensional real vector space.
\begin{definition}[$\mc{F}_R$] \label{def:Fr}
  A function $F: \Ev\to\R$ is in $\mc{F}_R(\Ev)$ if $F$ is convex and with (sub)gradient norm bounded as \vspace{0mm}
  $$\|\nabla F(\x) \| \le R \quad \text{ for $R \ge 0$ and for all $\x \in \Ev$}. \vspace{2mm}$$
\end{definition} \smallskip

\begin{definition}[$\mc{F}_{\mu,L}$] \label{def:FmL}
  A function $F: \Ev\to\R$ is in $\mc{F}_{\mu,L}(\Ev)$ if $F$ is $\mu$-strongly convex and $L$-smooth (with $0\le\mu\le L$), i.e. if we have, for all $(\x_1, \x_2) \in \Ev^2$\vspace{-4mm}

  \small
  \begin{align}
    \hspace{-6mm} F(\x_1) \ge F(\x_2) + \big\langle \nabla F(\x_2) \; , \; \x_1-\x_2 \big\rangle &+ \frac{\mu}{2} \|\x_1 - \x_2\|^2, \label{eq:muconv0} \\
    \| \nabla F(\x_1) - \nabla F(\x_1) \| &\le L \|\x_1-\x_2\|.  \label{eq:Lsmooth0}
  \end{align}
  \normalsize
\end{definition}
\medskip

\begin{definition}[$\mc{F}^s$] \label{def:Fs}
  Let $\mc{F}$ be a class of functions, e.g. $\mc{F}_R$ or $\mc{F}_{\mu,L}$. Then $\mc{F}^s(\Ev^N)$ is the set of functions $F:\Ev^N\to\R$ that are separable such that \vspace{-1.5mm}
  $$F(\x) = \frac{1}{N} \sum_{i=1}^N f_i(x_i), \quad \text{with $f_i \in \mc{F}(\Ev)$ for all $i$} \vspace{-1.5mm}$$
  where $\x = (x_1,\dots,x_N) \in \Ev^N$.
\end{definition} \smallskip

We now argue that the worst-case performance of a decentralized algorithm solving problem \eqref{opt:dec_prob2} can be bounded by its performance on problem \eqref{opt:gen_prob}.
The argument relies on the following proposition which follows from Definition \ref{def:Fs} combined with Definitions \ref{def:Fr} and \ref{def:FmL}. \smallskip
\begin{proposition}[Relaxation of separability] \label{prop:relax_gen}
The sets of separable functions (Definition \ref{def:Fs}) are such that
  \begin{align}
    \mc{F}_R^s(\Rvec{Nd}) \subseteq \mc{F}_{\frac{R}{\sqrt{N}}}(\Rvec{Nd}), \text{ and } \mc{F}_{\mu,L}^s(\Rvec{Nd}) \subseteq \mc{F}_{\frac{\mu}{N},\frac{L}{N}}(\Rvec{Nd}). \\[-7.5mm]
  \end{align}
\end{proposition}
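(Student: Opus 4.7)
The plan is to unfold the definition of separability and directly verify both inclusions by componentwise computation on the gradient. Starting from $F(\x) = \frac{1}{N}\sum_{i=1}^N f_i(x_i)$ with $\x = (x_1,\dots,x_N)$, the gradient has the block structure $\nabla F(\x) = \frac{1}{N}\qty(\nabla f_1(x_1),\dots,\nabla f_N(x_N))$, and hence $\|\nabla F(\x)\|^2 = \frac{1}{N^2}\sum_{i=1}^N \|\nabla f_i(x_i)\|^2$. This identity is the computational core driving both inclusions, so I would record it first, together with the remark that convexity of $F$ is inherited block-by-block from convexity of the $f_i$.

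For the first inclusion $\mc{F}_R^s(\Rvec{Nd}) \subseteq \mc{F}_{R/\sqrt{N}}(\Rvec{Nd})$, I would apply Definition \ref{def:Fr} to each $f_i$, which gives $\|\nabla f_i(x_i)\|\le R$, and sum to obtain $\|\nabla F(\x)\|^2 \le \frac{NR^2}{N^2}= R^2/N$. Taking square roots yields exactly the bound $R/\sqrt{N}$.

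For the second inclusion, I would verify \eqref{eq:muconv0} and \eqref{eq:Lsmooth0} separately. For strong convexity, write \eqref{eq:muconv0} for each $f_i$ at $(x_{1,i},x_{2,i})$, average with weight $1/N$, and use the decomposition $\|\x_1-\x_2\|^2 = \sum_i \|x_{1,i}-x_{2,i}\|^2$; the right-hand side becomes $\tfrac{\mu}{2N}\|\x_1-\x_2\|^2$, giving strong convexity modulus $\mu/N$. For smoothness, combine the block structure of the gradient with the per-component inequality $\|\nabla f_i(x_{1,i})-\nabla f_i(x_{2,i})\|^2 \le L^2\|x_{1,i}-x_{2,i}\|^2$, sum, factor out $1/N^2$, and take the square root to obtain Lipschitz constant $L/N$.

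The only delicate point is tracking the factor $1/N$ through the different norms: in the bounded-gradient case, squaring before summing $N$ copies yields the improvement $1/\sqrt{N}$, whereas for strong convexity and smoothness the outer $1/N$ factor in $F$ transfers linearly into the constants $\mu/N$ and $L/N$. I do not anticipate any conceptual obstacle; the argument is essentially a checklist confirming each defining inequality, with the separable structure of $F$ making the block-diagonal gradient identity the only nontrivial ingredient.
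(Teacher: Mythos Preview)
Your proposal is correct and follows essentially the same approach as the paper, which simply states that the proposition ``follows from Definition~\ref{def:Fs} combined with Definitions~\ref{def:Fr} and~\ref{def:FmL}'' without spelling out the verification; your write-up is precisely the componentwise unfolding of those definitions that the paper leaves implicit.
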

Then, any optimization problem (e.g. a PEP) with constraint $F \in \mc{F}_{\frac{R}{\sqrt{N}}}$ (resp. $F \in \mc{F}_{\frac{\mu}{N},\frac{L}{N}}$)
is a relaxation of the one with constraint $F \in \mc{F}_R^s$ (resp. $F \in \mc{F}_{\mu,L}^s$).
We can thus compute worst-case performance bounds of decentralized algorithms for solving \eqref{opt:dec_prob2} by analyzing their performance on the general problem \eqref{opt:gen_prob}. This consists, a priori, in a relaxation since we do no longer exploit the separability of the objective function.

To analyze the performance of an algorithm on problem \eqref{opt:gen_prob} using PEP, we need to express its constraint $\x \in \C$ in a Gram-representable form. To do this, we consider the problem in another coordinates system, which relies on the decomposition of $\Rvec{Nd}$ into subspaces $\C$ and $\Cp$.

\subsection{Change of variables and new coordinates system} \label{sec:change_of_var}
By definition, we have that $ \Rvec{Nd} = \C \oplus \Cp$, so that any vector $\x \in \Rvec{Nd}$ can be uniquely decomposed as
$$ \x = \xvb + \xvp, \qquad \text{with $\xvb \in \C$ and $\xvp \in \Cp$.} $$
Therefore, a vector $\x \in \C$ if and only if $\xvp = 0$. Moreover, as we will see, a consensus step with a generalized doubly stochastic averaging matrix does not impact $\xvb$, but only contracts $\xvp$.
These observations motivate the following change of variable. \smallskip

\begin{definition}[Change of variable $\bx$] \label{def:change_of_var}
\mbox{Let $\Qb = \frac{1}{\sqrt{N}}\qty(\mathbf{1}_N \otimes I_d )$} be the orthonormal basis of $\C$ and $\Qp \in \Rmat{Nd}{(N-1)d}$ the orthonormal basis of $\Cp$. \\
For any $\x \in \Rvec{Nd}$, we define $\bx \in \Rvec{Nd}$ as
\begin{equation} \label{eq:change_of_var}
  \bx = \begin{bmatrix} \xb \\\xp \end{bmatrix} = \frac{1}{\sqrt{N}}\begin{bmatrix} \Qb^T \\[2mm] \Qp^T \end{bmatrix} \x,
\end{equation}
where $\xb = \frac{1}{N} \sum_{i=1}^N x_i \in \Rvec{d}$ and $\xp \in \Rvec{(N-1)d}$. By definition, the matrix $Q = [\Qb~~\Qp]$ is orthogonal, and the change of variable can thus be inverted \emph{via}
\begin{equation} \label{eq:revert_change_of_var}
    \x = \sqrt{N} Q~\bx.
\end{equation}
\end{definition} \smallskip

This change of variable can be interpreted as a change of coordinate system: $\x$ describes the coordinates in the standard basis and $\bx$ its coordinates in a new basis $b$, with $\xb$ the part of the coordinates describing the position along the consensus subspace $\C$, and $\xp$ the part describing the position along the orthogonal complement $\Cp$. 
Using Definition \ref{def:change_of_var}, we can easily express the constraint $\x \in \C$.
\begin{proposition} \label{prop:cons_cons}$ \x \in \C \text{ iff } \xp = 0.$
\end{proposition}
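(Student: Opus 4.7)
The plan is to prove both directions of the equivalence directly from the change-of-variable formula in Definition~\ref{def:change_of_var} together with the orthogonal decomposition $\Rvec{Nd} = \C \oplus \Cp$. The key observation is that $\xp$ is defined as $\frac{1}{\sqrt{N}} \Qp^T \x$, where the columns of $\Qp$ form an orthonormal basis of $\Cp$, so $\xp$ is (up to a scalar) the vector of coefficients of the projection of $\x$ onto $\Cp$.

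For the forward direction, I would assume $\x \in \C$. Then by definition of $\Cp$ as the orthogonal complement, $\x^T \y = 0$ for every $\y \in \Cp$. In particular, each column of $\Qp$ lies in $\Cp$, so $\Qp^T \x = 0$, which gives $\xp = \frac{1}{\sqrt{N}} \Qp^T \x = 0$.

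For the reverse direction, I would assume $\xp = 0$ and use the inversion formula \eqref{eq:revert_change_of_var}: $\x = \sqrt{N}\, Q \bx = \sqrt{N}\, \Qb \xb + \sqrt{N}\, \Qp \xp = \sqrt{N}\, \Qb \xb$. Since the columns of $\Qb$ span $\C$, it follows that $\x \in \C$.

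There is essentially no obstacle here: the statement is a direct consequence of the orthogonal decomposition $\Rvec{Nd} = \C \oplus \Cp$ and the fact that the change of variable expresses $\x$ in a basis adapted to this decomposition. The only mild subtlety is to remember the $\frac{1}{\sqrt{N}}$ normalization, but this does not affect the vanishing of $\xp$ in either direction.
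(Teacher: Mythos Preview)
Your proof is correct and follows essentially the same approach as the paper: the paper's proof uses the inversion formula \eqref{eq:revert_change_of_var} to write $\x = \sqrt{N}\,\Qb\xb + \sqrt{N}\,\Qp\xp$ and then invokes the uniqueness of the decomposition $\Rvec{Nd} = \C \oplus \Cp$ to handle both directions at once, whereas you treat the forward direction via the direct formula $\xp = \frac{1}{\sqrt{N}}\Qp^T\x$ and the reverse direction exactly as the paper does. The difference is purely cosmetic.
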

\begin{proof}
  We obtain this from \eqref{eq:revert_change_of_var},
  $$ \x = \sqrt{N}\Qb\,\xb + \sqrt{N}\Qp\xp,$$ where the first term is in $\C$ and the second is in $\Cp$.
\end{proof} \smallskip

We now review the impacts of this change of variable on the generalized problem, the formulation of a decentralized algorithm and the Performance Estimation Problem. \smallskip

\subsubsection{Changes in the Generalized Decentralized Problem}
We define a new function $\tF(\bx):\Rvec{Nd}\to\Rvec{}$ depending on the new variable $\bx$ as
\begin{equation}  \label{eq:Ft_def}
  \tF(\bx) = F(\sqrt{N}Q~\bx) = F(\x).
\end{equation}
Using our change of variable and function, as well as Proposition \ref{prop:cons_cons}, problem \eqref{opt:gen_prob} becomes
\begin{align} \label{opt:gen_prob_b}
  \underset{\text{\normalsize $\bx \in \Rvec{Nd}$}}{\mathrm{min}} &\quad \tF(\bx), \\[-0.3mm]
  & \hspace{-10mm}\text{such that } \xp = 0.
\end{align}

Let us analyze what are the properties of the function $\tF$ and its gradient, in view of formulating the Performance Estimation Problem for algorithms solving such problem. \smallskip
\begin{proposition}[Gradients of $\tF$] \label{prop:grad} ~\\
    Let $\nabla \tF(\bx)$ be the gradient of $\tF$ at point $\bx$ and $\nabla F(\x)$ the gradient of $F$ at point $\x$. There holds
    \begin{equation} \label{eq:grad_b}
      \nabla \tF(\bx)= \sqrt{N} Q^T \nabla F(\x).
    \end{equation}
\end{proposition}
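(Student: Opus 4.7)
The plan is straightforward: apply the multivariate chain rule to the composition defining $\tF$. By \eqref{eq:Ft_def}, we have $\tF(\bx) = F(\sqrt{N}\,Q\,\bx)$, which is the composition of $F$ with the linear map $\bx \mapsto \sqrt{N}\,Q\,\bx$. This linear map has constant Jacobian $\sqrt{N}\,Q$, so the chain rule immediately yields
$$ \nabla \tF(\bx) = (\sqrt{N}\,Q)^T\, \nabla F(\sqrt{N}\,Q\,\bx) = \sqrt{N}\, Q^T \nabla F(\x), $$
where the last equality substitutes back $\x = \sqrt{N}\,Q\,\bx$ from \eqref{eq:revert_change_of_var}. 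This is exactly \eqref{eq:grad_b}.

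If a more elementary, self-contained verification is preferred, I would instead compare directional derivatives. For any direction $\mathbf{h} \in \Rvec{Nd}$,
$$ \langle \nabla \tF(\bx),\, \mathbf{h}\rangle = \lim_{t\to 0}\frac{\tF(\bx+t\mathbf{h})-\tF(\bx)}{t} = \lim_{t\to 0}\frac{F(\x+t\sqrt{N}Q\mathbf{h})-F(\x)}{t} = \langle \nabla F(\x),\, \sqrt{N}\,Q\,\mathbf{h}\rangle, $$
by the definition of $\tF$ and linearity of $\sqrt{N}\,Q$. Using the adjoint, this equals $\langle \sqrt{N}\,Q^T \nabla F(\x),\, \mathbf{h}\rangle$, and since $\mathbf{h}$ is arbitrary, the identity \eqref{eq:grad_b} follows.

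There is no real obstacle here; the whole content of the statement is the chain rule applied to a linear change of variable, and the only care needed is to track the transpose and the scalar factor $\sqrt{N}$ correctly. Notably, the orthogonality of $Q$ is not required for this particular identity, although it will matter elsewhere when converting scalar products in the new coordinates to scalar products in the original ones. The same identity extends to subgradients when $F$ is merely convex, since for an invertible linear change of variables the subdifferential transforms as $\partial \tF(\bx) = \sqrt{N}\,Q^T\, \partial F(\x)$, which is what is needed when \eqref{eq:grad_b} is subsequently invoked in PEPs involving nonsmooth functions from $\mc{F}_R$.
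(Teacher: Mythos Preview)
Your proof is correct and follows essentially the same approach as the paper: both apply the chain rule to the composition $\tF(\bx)=F(\sqrt{N}\,Q\,\bx)$ to obtain $\nabla\tF(\bx)=\sqrt{N}\,Q^T\nabla F(\x)$. Your additional remarks (the directional-derivative check, the observation that orthogonality of $Q$ is unnecessary here, and the subgradient extension) go beyond the paper's one-line proof but do not change the underlying argument.
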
 \smallskip
\begin{proof}
  Using the chain rule, direct computation shows
  $$ \nabla_{\bx} \tF(\bx) = \nabla_{\bx} F(\sqrt{N} Q~\bx) = \sqrt{N} Q^T \nabla_\x F(\x),$$
  where indices for $\nabla$ indicate which variable is considered in the derivatives.
\end{proof}
The gradient $\nabla \tF(\bx)$ can be decomposed in two parts that respectively describe the variation of the value of $\tF(\bx)$ with the consensus and orthogonal components:
\begin{equation} \label{eq:grad_b_decomp}
  \nabla \tF(\bx) = \begin{bmatrix} \nabla_{\paral}~ \tF(\bx) \\ \nabla_\perp \tF(\bx) \end{bmatrix}.
\end{equation}
Proposition \ref{prop:grad} implies the following corollary. \smallskip
\begin{corollary}\label{cor:grad}
  The norm of the gradient of $\tF$ at $\bx$ can be computed as \vspace{-1.5mm}
  $$\|\nabla \tF(\bx)\|_2 = \sqrt{N} \|\nabla F(\x)\|_2. \vspace{1mm} \vspace{-1mm}$$
  Moreover, if $F \in \mc{F}^s$, so that it is separable as $F(\x) = \frac{1}{N}\sum_{i=1}^N f_i(x_i)$, then $\nabla \tF(\bx)$ is the coordinate vector in basis $b$ of the vector \mbox{$\mathbf{g}^T = \qty[\nabla f_1^T(x_1) \dots\nabla f_N^T(x_N)]$}, i.e. \vspace{-1mm}
  \begin{equation} \label{eq:grad_fi}
    \nabla \tF(\bx)= \frac{1}{\sqrt{N}} Q^T \mathbf{g} =\, _b\mathbf{g}. \vspace{-1mm}
  \end{equation}
  In particular, we have that
  $\nabla_{\paral}~ \tF(\bx) = \frac{1}{N}\sum_{i=1}^N \nabla f_i(x_i)$.
\end{corollary}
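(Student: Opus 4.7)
The plan is to deduce everything directly from Proposition~\ref{prop:grad}, together with the orthogonality of $Q$ and the explicit form of the separable gradient. The main work is really just careful bookkeeping of the $\sqrt{N}$ factors coming from the normalization of $\Qb$ and $\Qp$.

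First, for the norm identity, I would apply Proposition~\ref{prop:grad} to write $\nabla \tF(\bx) = \sqrt{N}\, Q^T \nabla F(\x)$ and take the Euclidean norm on both sides. Since $Q = [\Qb\ \ \Qp]$ is orthogonal (its columns form an orthonormal basis of $\Rvec{Nd}$), multiplication by $Q^T$ preserves the $2$-norm, so $\|Q^T \nabla F(\x)\|_2 = \|\nabla F(\x)\|_2$, giving $\|\nabla \tF(\bx)\|_2 = \sqrt{N}\,\|\nabla F(\x)\|_2$ as claimed.

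Next, for the separable case, I would compute $\nabla F(\x)$ explicitly. If $F(\x) = \frac{1}{N}\sum_{i=1}^N f_i(x_i)$, then differentiating with respect to the block $x_i$ yields $\frac{1}{N} \nabla f_i(x_i)$, so stacking the blocks gives $\nabla F(\x) = \frac{1}{N}\mathbf{g}$ with $\mathbf{g}$ as in \eqref{eq:vec_not}. Substituting into Proposition~\ref{prop:grad} gives $\nabla \tF(\bx) = \sqrt{N}\, Q^T \cdot \frac{1}{N}\mathbf{g} = \frac{1}{\sqrt{N}} Q^T \mathbf{g}$, which is exactly the coordinate representation of $\mathbf{g}$ in the basis $b$ (since the change of variable in Definition~\ref{def:change_of_var} already carries a $\frac{1}{\sqrt{N}}$ factor), proving \eqref{eq:grad_fi}.

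Finally, for the consensus component, I would isolate the first $d$ rows of $\frac{1}{\sqrt{N}} Q^T \mathbf{g}$, which correspond to $\frac{1}{\sqrt{N}} \Qb^T \mathbf{g}$. Using $\Qb = \frac{1}{\sqrt{N}}(\mathbf{1}_N \otimes I_d)$ from Definition~\ref{def:cons_space}, the product $\Qb^T \mathbf{g} = \frac{1}{\sqrt{N}}(\mathbf{1}_N^T \otimes I_d)\mathbf{g} = \frac{1}{\sqrt{N}}\sum_{i=1}^N \nabla f_i(x_i)$, so $\nabla_{\paral}\tF(\bx) = \frac{1}{N}\sum_{i=1}^N \nabla f_i(x_i)$. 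No step here is really an obstacle; the only care required is to not forget that the $\frac{1}{N}$ averaging in $F$ combines with the $\sqrt{N}$ from the chain rule to leave a net factor of $\frac{1}{\sqrt{N}}$, which is precisely what matches the change-of-basis scaling.
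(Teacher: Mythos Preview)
Your proof is correct and follows exactly the route the paper intends: the corollary is stated without proof, as a direct consequence of Proposition~\ref{prop:grad} combined with the orthogonality of $Q$ and the explicit block form of the separable gradient. Your bookkeeping of the $\sqrt{N}$ factors and the use of $\Qb = \frac{1}{\sqrt{N}}(\mathbf{1}_N \otimes I_d)$ are precisely the computations the paper leaves implicit.
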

\medskip
\begin{proposition}[Optimal Conditions of \eqref{opt:gen_prob_b}] \label{prop:opt_cond}
  The vector $\bx^*$ is an optimal solution for problem \eqref{opt:gen_prob_b} if and only if
  $$ \xp^* = 0 \quad \text{ and } \quad \nabla_{\paral} \tF(\bx^*) = 0.$$
\end{proposition}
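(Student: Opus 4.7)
The plan is to recognize that problem \eqref{opt:gen_prob_b} is a convex optimization problem whose feasible set is the linear subspace $\{\bx : \xp = 0\}$, and then apply the standard first-order optimality condition for a convex function restricted to an affine subspace: the gradient must be orthogonal to the subspace, i.e. its component along the feasible directions must vanish. Concretely, feasibility immediately forces $\xp^* = 0$, so the whole content of the proof is showing that among feasible points, optimality is equivalent to $\nabla_{\paral}\tF(\bx^*)=0$.

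First I would record that $\tF$ inherits convexity from $F$: for every class of functions $\mc F$ introduced in Definitions \ref{def:Fr} and \ref{def:FmL}, $F$ is convex, and since $\tF(\bx)=F(\sqrt N Q\,\bx)$ is the composition of $F$ with an invertible linear map (Definition \ref{def:change_of_var}), $\tF$ is also convex. Then the feasible set is the linear subspace $\mc S=\{\bx:\xp=0\}$, whose tangent directions at any point are exactly the vectors of the form $\bd=(\bd_{\paral},0)$.

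For the \emph{necessity} direction I would assume $\bx^*$ is optimal. Feasibility gives $\xp^*=0$ directly. For the gradient condition, I would use that for any $\bd_{\paral}\in\R^{d}$ the perturbation $\bx^*+t(\bd_{\paral},0)$ is feasible for all $t\in\R$. Convexity of $\tF$ then implies that the map $t\mapsto \tF(\bx^*+t(\bd_{\paral},0))$ is convex and has a minimum at $t=0$, so its derivative at $0$ vanishes, i.e. $\nabla_{\paral}\tF(\bx^*)^T\bd_{\paral}=0$. Since $\bd_{\paral}$ is arbitrary, $\nabla_{\paral}\tF(\bx^*)=0$, using decomposition \eqref{eq:grad_b_decomp}.

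For \emph{sufficiency} I would assume $\xp^*=0$ and $\nabla_{\paral}\tF(\bx^*)=0$ and pick any feasible $\bx$, so that $\xp=0$ and hence $(\bx-\bx^*)_\perp=0$. By convexity of $\tF$,
\begin{equation*}
\tF(\bx)\;\ge\;\tF(\bx^*)+\nabla\tF(\bx^*)^T(\bx-\bx^*)=\tF(\bx^*)+\nabla_{\paral}\tF(\bx^*)^T(\xb-\xb^*)=\tF(\bx^*),
\end{equation*}
which shows that $\bx^*$ is a global minimizer over the feasible set. The only thing to be careful about is that the classes in Definitions \ref{def:Fr}--\ref{def:FmL} guarantee convexity (so the first-order characterization of minima is both necessary and sufficient); there is no real obstacle beyond invoking this standard fact, and the decomposition in \eqref{eq:grad_b_decomp} cleanly isolates the component of $\nabla\tF(\bx^*)$ that matters along the feasible directions.
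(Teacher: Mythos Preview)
Your proof is correct and follows essentially the same approach as the paper: both note that feasibility forces $\xp^*=0$, reduce the problem to an unconstrained convex minimization over the consensus component $\xb$, and then invoke the standard first-order optimality condition from convexity. The paper's version is simply terser, stating that the problem is equivalent to $\min_{\bx}\tF\!\left(\begin{smallmatrix}\xb\\0\end{smallmatrix}\right)$ and that the result then follows by convexity, whereas you spell out the necessity and sufficiency directions explicitly.
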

\begin{proof}
  By definition of problem \eqref{opt:gen_prob_b}, $\xp = 0$ for any feasible solution and thus the problem is equivalent to \vspace{-1mm}
  $$\underset{\text{\normalsize $\bx \in \Rvec{Nd}$}}{\mathrm{min}} \quad \tF\qty(\begin{bmatrix} \xb \\ 0 \end{bmatrix}),  $$
  from which the result follows by convexity.
\end{proof}

\begin{proposition}[Properties of $\tF$] \label{prop:Ft} ~
  In general,
  \begin{enumerate}[(i)]
    \item If $F \in \mc{F}_{\frac{R}{\sqrt{N}}}$, then $\tF \in \mc{F}_{R}$;
    \item If $F \in \mc{F}_{\frac{\mu}{N}, \,\frac{L}{N}}$, then $\tF \in \mc{F}_{\mu,L}$.
  \end{enumerate}
  In cases where $F$ is separable, this becomes
  \begin{enumerate}[(i)]
    \setcounter{enumi}{2}
    \item If $F \in \mc{F}^s_R$, then $\tF \in \mc{F}_{R}$;
    \item If $F \in \mc{F}^s_{\mu,L}$, then $\tF \in \mc{F}_{\mu,L}$.
  \end{enumerate}
\end{proposition}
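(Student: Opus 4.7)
The plan is to exploit the fact that the change of variable $\x = \sqrt{N}\, Q\,\bx$ is a scaled orthogonal map. First I would observe that, since $Q$ is orthogonal, $\|\x_1-\x_2\| = \sqrt{N}\,\|\bx_1-\bx_2\|$, and by Proposition \ref{prop:grad}, $\nabla\tF(\bx) = \sqrt{N}\,Q^T\nabla F(\x)$, so $\|\nabla\tF(\bx)\| = \sqrt{N}\,\|\nabla F(\x)\|$ (as already stated in Corollary \ref{cor:grad}). Convexity of $\tF$ is automatic, because $\tF$ is the composition of a convex $F$ with a linear map.

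For part (i), I would plug the bound $\|\nabla F\|\le R/\sqrt{N}$ directly into the above norm identity to get $\|\nabla\tF\|\le R$, which combined with convexity places $\tF$ in $\mc{F}_R$. For part (ii), I would start from the $\mu/N$-strong convexity inequality \eqref{eq:muconv0} for $F$ evaluated at $\x_1,\x_2$, rewrite the inner product $\langle\nabla F(\x_2),\x_1-\x_2\rangle = \langle\sqrt{N} Q^T\nabla F(\x_2),\bx_1-\bx_2\rangle = \langle\nabla\tF(\bx_2),\bx_1-\bx_2\rangle$ using the definition of $\bx$ and Proposition \ref{prop:grad}, and use $\|\x_1-\x_2\|^2 = N\|\bx_1-\bx_2\|^2$ to turn the coefficient $\tfrac{\mu/N}{2}$ into $\tfrac{\mu}{2}$. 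The $L/N$-smoothness \eqref{eq:Lsmooth0} transforms analogously: the left-hand side $\|\nabla F(\x_1)-\nabla F(\x_2)\|$ picks up a factor $1/\sqrt{N}$ to become $\|\nabla\tF(\bx_1)-\nabla\tF(\bx_2)\|$, while the right-hand side $(L/N)\|\x_1-\x_2\|$ becomes $L\|\bx_1-\bx_2\|/\sqrt{N}\cdot$ times... I mean, multiplying appropriately gives exactly $L\|\bx_1-\bx_2\|$. Hence $\tF\in\mc{F}_{\mu,L}$.

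Parts (iii) and (iv) then follow immediately by chaining with Proposition \ref{prop:relax_gen}: the separability assumption $F\in\mc{F}_R^s$ (resp.\ $F\in\mc{F}_{\mu,L}^s$) implies $F\in\mc{F}_{R/\sqrt{N}}$ (resp.\ $F\in\mc{F}_{\mu/N,L/N}$), so we may invoke (i) (resp.\ (ii)) to conclude $\tF\in\mc{F}_R$ (resp.\ $\tF\in\mc{F}_{\mu,L}$).

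There is no real obstacle here; the only thing to be careful about is tracking the $\sqrt{N}$ factors consistently: one factor from the scaling of the linear map in the definition of $\bx$, another from the norm identity for $\|\x_1-\x_2\|$, and cancellation against the $1/N$ (resp.\ $1/\sqrt{N}$) hidden in the hypothesis classes. Once these factors are laid out side by side, the four implications reduce to one-line verifications.
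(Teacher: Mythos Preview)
Your proposal is correct and follows essentially the same approach as the paper's proof: invoke Corollary~\ref{cor:grad} for (i), substitute \eqref{eq:revert_change_of_var}, \eqref{eq:Ft_def}, \eqref{eq:grad_b} and use orthogonality of $Q$ to transform the defining inequalities of $\mc{F}_{\mu/N,L/N}$ into those of $\mc{F}_{\mu,L}$ for (ii), and chain with Proposition~\ref{prop:relax_gen} for (iii)--(iv). The only difference is that you spell out the $\sqrt{N}$ bookkeeping that the paper leaves implicit.
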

\begin{proof}
(i) If $F \in \mc{F}_{\frac{R}{\sqrt{N}}}$, then $\|\nabla F(\x)\|_2 \le \frac{R}{\sqrt{N}}$ for all $\x$ and using Corollary \ref{cor:grad}, we have $ \|\nabla \tF(\bx)\|_2 \le R.$
(ii) Starting from Definition \ref{def:FmL} for $F \in \mc{F}_{\frac{\mu}{N}, \,\frac{L}{N}}$, we can use equation \eqref{eq:revert_change_of_var}, \eqref{eq:Ft_def} and \eqref{eq:grad_b}
to express it in term of function $\tF$, its gradients and variables $\bx_1$ and $\bx_2$. Then, using orthogonality of $Q$, we recover conditions for $\tF \in \mc{F}_{\mu,L}$. \\
  Finally, (iii) and (iv) directly follows from (i) and (ii) combined with Proposition \ref{prop:relax_gen}.
\end{proof}

\smallskip
\subsubsection{Changes in the Decentralized Algorithms}~\\
After applying the changes of variables \eqref{eq:revert_change_of_var}, an algorithm for solving problem \eqref{opt:gen_prob} will be suited for solving problem \eqref{opt:gen_prob_b}.
The impact of these changes of variables can be treated independently for each of the three possible operations of a decentralized algorithm from $\M$.

\emph{Gradient:} Using Corollary \ref{cor:grad}, the computation of the vector $\mathbf{g}$ of local gradients of function $F$ at point $\x$ simply becomes the computation of $\nabla \tF(\bx) = \, _b\mathbf{g}$.

\emph{Combinations:} Since $Q$ is orthogonal, the changes of variables does not modify the linear combination operations, e.g. applying \eqref{eq:revert_change_of_var} to the combination $\z = \alpha\x + \beta\y$ gives
$$ \sqrt{N}Q \,_b\z = \sqrt{N}Q(\alpha\, _b\x + \beta\,_b\y) ~\Leftrightarrow~ _b\z = \alpha\, _b\x + \beta\,_b\y. \vspace{-1mm}$$

\emph{Consensus:} Theorem \ref{thm:consb} analyzes the effects of a consensus step on the new variables.
\begin{theorem}[Consensus step in basis $\bn$] \label{thm:consb}
Let $W \in \Rmat{N}{N}$ be a symmetric generalized doubly stochastic matrix.
The consensus step $\y = (W \otimes I_d) \x$, is expressed in basis $b$ as
\begin{equation} \label{eq:cons_basis_b_to_prove}
  \by = \begin{bmatrix} \yb \\ \yp \end{bmatrix} = \begin{bmatrix} \xb \\ \tilde{W} \xp \end{bmatrix},
\end{equation}
where $\tilde{W} \in \Rmat{(N-1)d}{(N-1)d}$ has the same eigenvalues as $W$ with a multiplicity $d$ times larger, except for $\lam_1 = 1$ that is not an eigenvalue of $\Wt$.
\end{theorem}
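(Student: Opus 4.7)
The strategy is to substitute the change of variable into the consensus step and read off the block structure of the resulting matrix in basis $b$.

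First I would apply \eqref{eq:revert_change_of_var} to both sides of $\y = (W\otimes I_d)\x$, which gives $\sqrt{N}\,Q\by = (W\otimes I_d)\sqrt{N}\,Q\bx$, so that
\begin{equation*}
  \by \;=\; Q^T(W\otimes I_d)\,Q\,\bx \;=\; \begin{bmatrix} \Qb^T(W\otimes I_d)\Qb & \Qb^T(W\otimes I_d)\Qp \\ \Qp^T(W\otimes I_d)\Qb & \Qp^T(W\otimes I_d)\Qp \end{bmatrix}\bx,
\end{equation*}
using the orthogonality of $Q$. Everything then reduces to evaluating the four blocks.

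For the diagonal blocks involving $\Qb$, the key computation is $(W\otimes I_d)\Qb = \tfrac{1}{\sqrt{N}}\bigl((W\mathbf{1}_N)\otimes I_d\bigr) = \tfrac{1}{\sqrt{N}}(\mathbf{1}_N\otimes I_d) = \Qb$, because $W$ is (generalized) doubly stochastic and hence $W\mathbf{1}_N = \mathbf{1}_N$. Using $\Qb^T\Qb = I_d$ and $\Qp^T\Qb = 0$, the first block column of $Q^T(W\otimes I_d)Q$ is therefore $\bigl[\,I_d\;;\;0\,\bigr]$. For the off-diagonal block $\Qb^T(W\otimes I_d)\Qp$, I would invoke the dual fact $\mathbf{1}_N^T W = \mathbf{1}_N^T$: this means $\Cp$ is invariant under $W\otimes I_d$, so $(W\otimes I_d)\Qp$ has its columns in $\Cp$ and is annihilated by $\Qb^T$. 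Hence $Q^T(W\otimes I_d)Q$ is block-diagonal, with blocks $I_d$ and $\Wt := \Qp^T(W\otimes I_d)\Qp$, which yields \eqref{eq:cons_basis_b_to_prove} immediately.

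For the spectral claim, the matrix $Q^T(W\otimes I_d)Q$ is orthogonally similar to $W\otimes I_d$, whose eigenvalues are exactly those of $W$ each with multiplicity multiplied by $d$. Since the block $I_d$ contributes the eigenvalue $1$ with multiplicity $d$, the remaining block $\Wt$ carries the rest of the spectrum: each eigenvalue $\lambda_i$ of $W$ appears in $\Wt$ with multiplicity $d$ times its multiplicity in $W$, except that $d$ copies of $\lambda_1 = 1$ are removed. Under the standard convention that $\lambda_1 = 1$ is simple in $W$ (which holds whenever the network is connected), this means $1$ is not an eigenvalue of $\Wt$, matching the statement.

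The main obstacle is the bookkeeping for the block $\Qb^T(W\otimes I_d)\Qp$: one must notice that symmetry of $W$ (or at least $\mathbf{1}_N^T W = \mathbf{1}_N^T$, which symmetry guarantees) is exactly what makes $\Cp$ invariant and hence the off-diagonal block vanish. Without this, the consensus step would still shrink $\xp$ but could also perturb $\xb$, and the clean decoupling in \eqref{eq:cons_basis_b_to_prove} would fail.
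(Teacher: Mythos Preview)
Your proof is correct and follows the same overall scheme as the paper: conjugate $W\otimes I_d$ by the orthogonal matrix $Q=[\Qb~\Qp]$ and read off the block structure to obtain \eqref{eq:cons_basis_b_to_prove}, then use similarity to transfer the spectrum.

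Where you differ slightly is in how the four blocks are evaluated. The paper goes through the full eigen-decomposition $W\otimes I_d = V\Lambda V^T$, identifies $V_{\paral}=\Qb$, and uses orthogonality of the eigenvectors to see that the off-diagonal blocks vanish; this yields $\Wt = \Qp^T V_\perp \Lambda_\perp V_\perp^T \Qp$. You instead compute the blocks directly from the doubly stochastic identities $W\mathbf{1}_N=\mathbf{1}_N$ and $\mathbf{1}_N^T W=\mathbf{1}_N^T$, obtaining $(W\otimes I_d)\Qb=\Qb$ and the invariance of $\Cp$, which kills the off-diagonal blocks without invoking the spectral theorem. Your route is a bit more elementary and has the merit of separating the hypotheses cleanly: the block-diagonal form \eqref{eq:cons_basis_b_to_prove} only needs double stochasticity, while symmetry is used solely for the eigenvalue claim (to ensure $Q^T(W\otimes I_d)Q$ is a genuine similarity with real spectrum). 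The paper's route, on the other hand, delivers the explicit spectral expression for $\Wt$ in one stroke. Both arrive at the same $\Wt=\Qp^T(W\otimes I_d)\Qp$ and the same conclusion; your caveat that ``$1$ is not an eigenvalue of $\Wt$'' requires $\lambda_1=1$ to be simple in $W$ is also implicit in the paper's statement.
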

\begin{proof}
  When applying changes of variables \eqref{eq:revert_change_of_var} in the standard consensus step \eqref{eq:cons}, since $Q$ is orthogonal, we have \vspace{-2.5mm}
  \begin{equation} \label{eq:cons_basis_b}
    \by =\,\bW ~ \bx \quad \text{with } \quad \bW = Q^T(W \otimes I_d)Q, \vspace{-0.5mm}
  \end{equation}
  where $\bW \in \Rmat{Nd}{Nd}$ is the averaging matrix for the consensus expressed in the basis $b$ and has the same eigenvalues as $(W \otimes I_d)$ by similarity.
  By definition, matrix $(W \otimes I_d)$ is symmetric, generalized doubly stochastic, has the same eigenvalues than $W$ with a multiplicity $d$ times larger for each of them, and it can thus be written using the eigen-decomposition: \vspace{-1mm}
  $$W \otimes I_d = V \Lambda V^T = \begin{bmatrix} V_{\paral} & V_\perp \end{bmatrix} \begin{bmatrix} I_d & 0 \\ 0 & \Lambda_\perp \end{bmatrix} \begin{bmatrix}V_{\paral}^T \\[1mm] V_\perp^T \end{bmatrix}, \vspace{-1mm} $$
  where $V_{\paral} = \Qb = \frac{1}{\sqrt{N}}\qty(\mathbf{1}_N \times I_d)$ are the orthonormal eigenvectors corresponding to the $d$ eigenvalues equal to 1 and $V_\perp$ are the orthonormal eigenvectors corresponding to other eigenvalues $\Lambda_\perp$. All the eigenvectors are orthogonal to each other and therefore, since $V_{\paral} = \Qb$, the matrix $\bW$ from \eqref{eq:cons_basis_b} is given by \vspace{-1mm}
    \begin{align}
      \begin{bmatrix} \Qb^T \\[2mm] \Qp^T \end{bmatrix} \begin{bmatrix} V_{\paral} & V_\perp \end{bmatrix} \begin{bmatrix} I_d & 0 \\ 0 & \Lambda_\perp \end{bmatrix}
      \begin{bmatrix}V_{\paral}^T \\[1mm] V_\perp^T \end{bmatrix} \begin{bmatrix} \Qb & \Qp \end{bmatrix}
        &= \begin{bmatrix} I_d & 0 \\ 0 & \Wt \end{bmatrix}, \label{eq:last_bW}\\[-11mm]
    \end{align}
    where $\Wt = \Qp^TV_\perp \Lambda_\perp V_\perp^T\Qp$ and is thus symmetric. Eigenvalues of $\Wt$ are the ones in $\Lambda_\perp$ since $\bW$ and $(W \otimes I_d)$ have the same eigenvalues.
    Finally, we find \eqref{eq:cons_basis_b_to_prove} using this last expression for $\bW$ in the consensus step \eqref{eq:cons_basis_b}.
\end{proof} \smallskip
Theorem \ref{thm:consb} formalizes the separate effects of a consensus step that we discussed at the beginning of Section \ref{sec:change_of_var}.

We now have access to the problem, its properties, and any decentralized algorithm $\mc{M} \in \M$ written using the new variables. There only remains to formulate the PEP that evaluates the worst-case performance of $\mc{M}$ for solving problem \eqref{opt:gen_prob_b} with a given class of functions.
 \smallskip

\subsubsection{Performance Estimation Problem in basis $b$}
This subsection reviews the main ingredients of a PEP \eqref{eq:gen_PEP} after the changes of variables \eqref{eq:revert_change_of_var}, namely, the ingredients of a PEP applied to a decentralized algorithm for solving problem \eqref{opt:gen_prob_b}.
We explain how such PEPs can be formulated as SDPs that are independent of the dimension of vector $\bx \in \Rvec{Nd}$, and thus independent of $N$.

First, the optimal conditions for \eqref{opt:gen_prob_b} are given in Proposition \ref{prop:opt_cond}. We observe that they require the use of specific components $\xp$ and $\nabla_{\paral} \tF$, independently of the full vectors $\bx$ and $\nabla \tF$. This separation of consensus coordinates and orthogonal coordinates also occurs in the effect of consensus step in Theorem \ref{thm:consb}.
To be able to use such components independently from each other, we split the vectors $\bx^k$ and $_b\mathbf{g}^k =\nabla \tF(\bx^k)$ in two parts in the SDP PEP formulation and we consider the scalar product of each part separately.
This results then in two Gram matrices: \vspace{-1mm}
  $$ G_{\paral} = P_{\paral}^TP_{\paral} \quad \text{ and } \quad  G_{\perp} = P_{\perp}^TP_{\perp}, \vspace{-1mm}$$
  where $P_{\paral} \in \Rmat{d}{2K+2}$ and $P_{\perp}\in \Rmat{(N-1)d}{2K+2}$ contains respectively the consensus and the orthogonal components of vectors $\bx^k$ and $_b\mathbf{g}^k$.
  Both Gram matrices have identical sizes, independent of the number of agents $N$.
  They can possibly be combined into a single matrix $G \succeq 0$ as
  $$G = \begin{bmatrix} G_{\paral} & 0 \\ 0 & G_{\perp} \end{bmatrix}.$$

This modification still allows writing all the constraints involving scalar products between the iterates $\bx$ or the gradient $_b\mathbf{g}$, e.g. the usual interpolations conditions for imposing that $\tF \in \mc{F}$, using the fact that \vspace{-1mm}
\begin{align}
  (\bx)^T(\bx) &= \xb^T\xb + \xp^T\xp,\qquad (_b\mathbf{g})^T(_b\mathbf{g}) = \tg_{\paral}^T\tg_{\paral} + \tg_{\perp}^T\tg_{\perp}, \\[-4mm]
  &\text{ and } \quad \bx^T\tg = \xb^T\tg_{\paral} + \xp^T\tg_{\perp}.
\end{align}
The usual interpolation constraints are well linearly Gram-representable as they are expressed as linear combinations of scalar products between $\xb^k, \tg_{\paral}^k$ and between $\xp^k, \tg_{\perp}^k$, and they can thus be embedded in the SDP PEP formulation, see Proposition \ref{prop:GramPEP}.

Concerning the algorithmic steps constraints, we need to decouple them into constraints for the consensus components and for the orthogonal components. Let us analyze separately each possible operation of a decentralized algorithm.

\emph{Gradient and Combinations:} The gradient computation and the linear combinations can directly be decoupled for the consensus and the orthogonal components. These constraints are linear equalities and are thus linearly Gram-representable.

\emph{Consensus:}
For a consensus step with a symmetric and generalized doubly stochastic averaging matrix $W$, the update for the consensus and orthogonal parts is given in Theorem \ref{thm:consb}. For the consensus components, it also corresponds to linear equality that is linearly Gram-representable. For the orthogonal component, the linear equality involves a modified averaging matrix $\Wt$ that cannot be known exactly without distinguishing the different agents individually. Therefore, we consider a spectral description of the symmetric matrix $\Wt$ that specifies its range of eigenvalues: $\lam(\Wt) \in \qty[\lm, \lp]$, with $\lm \le \lp \in \qty(-1,1)$, which is identical to the one of $W$,
excluding $\lam_1(W) = 1$ (Theorem \ref{thm:consb}). We stress that the range $\qty[\lm, \lp]$ is known in our performance analysis, but does not necessarily need to be known for the execution of the algorithms.
We now adapt previous work \cite{PEP_dec} to represent, in a Gram-representable manner, a set of $K$ consensus steps using the same matrix $\Wt$: \vspace{-1mm}
\begin{equation} \label{eq:mat_cons}
  Y = \Wt X \quad \text{with } \quad \Wt \in \Wcl{\lm}{\lp}, \vspace{-1mm}
\end{equation}
where $\Wcl{\lm}{\lp}$ is the class of symmetric matrices of size $(N-1)d$ with eigenvalues between $\lm$ and $\lp$ and $Y,X \in \Rmat{(N-1)d}{K}$ are the matrices whose $k$-th column is the vector involved in the $k$-th consensus step. Consensus steps \eqref{eq:mat_cons} are replaced by necessary LMI Gram-representable constraints that are stated in Theorem \ref{thm:conscons}. These constraints allow the PEP to find worst-case valid over an entire spectral class of averaging matrix. \smallskip
\begin{theorem}[Consensus Constraints from {\cite[Theorem 3]{PEP_dec}}] \label{thm:conscons}
If $Y = \Wt X$ with $\Wt \in \Wcl{\lm}{\lp}$, then the matrix $X^T Y$ is symmetric and
the following LMI Gram-representable constraints are satisfied \vspace{-1mm}
  \begin{align}
    \lm X^T X ~ \preceq  ~X^T Y~ &\preceq ~\lp X^T X, \label{eq:scal_cons} \\
    (Y - \lm X)^T(Y - \lp X) ~&\preceq ~0, \hspace{20mm} \label{eq:var_red} \\[-6.5mm]
  \end{align}
  where the notations $\preceq$ denote negative semi-definiteness.
\end{theorem}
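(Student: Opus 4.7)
The plan is to exploit the representation $Y = \Wt X$ together with the fact that $\Wt$ is symmetric with spectrum contained in $[\lm,\lp]$, and then to push symmetric operator inequalities through the congruence transformation $A \mapsto X^T A X$, which preserves semidefinite ordering.

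First I would verify that $X^T Y$ is symmetric. Substituting $Y = \Wt X$ gives $X^T Y = X^T \Wt X$, and since $\Wt$ is symmetric by assumption, so is $X^T\Wt X$. Next, for the eigenvalue-bound inequality \eqref{eq:scal_cons}, I would start from the operator inequality $\lm I \preceq \Wt \preceq \lp I$, which follows directly from $\Wt$ being symmetric with eigenvalues in $[\lm,\lp]$. Conjugating by $X$ on both sides preserves the semidefinite ordering, so $\lm X^T X \preceq X^T \Wt X \preceq \lp X^T X$, which is exactly \eqref{eq:scal_cons} after using $X^T Y = X^T \Wt X$.

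For the variance-reduction type inequality \eqref{eq:var_red}, the key observation is that expansion and the substitution $Y = \Wt X$ factor nicely. Concretely,
\begin{equation*}
(Y-\lm X)^T(Y-\lp X) = X^T\Wt^2 X - (\lm+\lp)X^T\Wt X + \lm\lp X^T X = X^T(\Wt-\lm I)(\Wt-\lp I)X,
\end{equation*}
where I used that $Y^T Y = X^T\Wt^2 X$, $X^T Y = Y^T X = X^T\Wt X$ (the latter by symmetry of $\Wt$). Since $\Wt$ is symmetric, functional calculus applies: the matrix $(\Wt-\lm I)(\Wt-\lp I)$ has eigenvalues $(\lambda-\lm)(\lambda-\lp)$ for $\lambda \in \mathrm{spec}(\Wt) \subseteq [\lm,\lp]$, each of which is $\le 0$. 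Hence $(\Wt-\lm I)(\Wt-\lp I) \preceq 0$, and congruence by $X$ yields \eqref{eq:var_red}.

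The only subtlety I anticipate is making the factorization step cleanly; once one sees that the cross terms combine using symmetry of $\Wt$ to produce the polynomial $(\Wt-\lm I)(\Wt-\lp I)$, the rest is a direct application of the fact that congruence $A\mapsto X^T A X$ is order-preserving on symmetric matrices. Everything else is bookkeeping; no spectral decomposition of $\Wt$ is strictly needed beyond the scalar observation on eigenvalues.
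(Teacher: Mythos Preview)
Your proof is correct: symmetry of $X^T Y = X^T \Wt X$ is immediate, the sandwich inequality follows from the operator bound $\lm I \preceq \Wt \preceq \lp I$ via congruence, and the factorization $(Y-\lm X)^T(Y-\lp X) = X^T(\Wt-\lm I)(\Wt-\lp I)X$ together with the scalar observation $(\lambda-\lm)(\lambda-\lp)\le 0$ on $[\lm,\lp]$ gives the second inequality. The paper itself omits the proof, deferring to the cited reference, and your argument is exactly the standard congruence-based derivation one would expect there.
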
 \smallskip
The proof of Theorem \ref{thm:conscons} is very similar to \cite[Theorem 3]{PEP_dec} and is omitted.

The last important ingredients of PEP are the initial conditions and the performance criterion. Using $(\xb^k, \xp^k, \tg_{\paral}^k, \tg_{\perp}^k, \tF^k)_{k=0,\dots,K,*}$, as well as all their properties studied in previous subsections, we can formulate any metric that is not agent-specific. Propositions \ref{prop:init2} and \ref{prop:init1} gives two examples for initial conditions.
Metrics used for initial conditions can also be used as performance criteria. \smallskip
\begin{proposition} \label{prop:init2}
  The initialization of variables $x_i^0 \in \Rvec{d}$, for $i=1,\dots,N$ such that \vspace{-1mm}
  \begin{equation} \label{eq:init2}
    x_i^0 = x^0 \text{ for all $i$, and } \quad \|x^0 - x^*\|_2^2 \le D^2, \vspace{-1mm}
  \end{equation}
  where $x^* \in \Rvec{d}$ is an optimal solution of \eqref{opt:dec_prob}, is equivalent to the following linearly Gram-representable constraints \vspace{-1mm}
  $$ \xp^0 = 0, \quad \text{ and } \quad \|\xb^0 - \xb^*\|_2^2 \le D^2, \vspace{-1mm}$$
  where $_b \x^* = \begin{bmatrix} \xb^* \\ 0 \end{bmatrix} \in \Rvec{Nd}$ is an optimal solution of \eqref{opt:gen_prob_b}.
\end{proposition}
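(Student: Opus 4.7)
My plan is to reduce the claimed equivalence to two independent pieces, handled separately, and then verify Gram-representability at the end.

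For the first piece, the condition $x_i^0 = x^0$ for all $i$ is precisely $\x^0 \in \C$, and Proposition \ref{prop:cons_cons} converts this directly into $\xp^0 = 0$. The same reasoning applied to the optimum $\x^*$, which for the separable problem \eqref{opt:dec_prob2} takes the form $\x^* = \mathbf{1}_N \otimes x^*$ and therefore lies in $\C$, yields $\xp^* = 0$; this matches the structure of $_b\x^*$ posited in the statement.

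For the second piece, I will compute the consensus component explicitly. From Definition \ref{def:change_of_var}, $\xb = \tfrac{1}{\sqrt{N}}\Qb^T\x = \tfrac{1}{N}\sum_{i=1}^N x_i$, so $x_i^0 = x^0$ forces $\xb^0 = x^0$, and analogously $\xb^* = x^*$. Then $\|x^0 - x^*\|_2^2 = \|\xb^0 - \xb^*\|_2^2$, so the bound $\le D^2$ transfers in either direction. Conversely, once $\xp^0 = 0$ is established, the inverse change of variable \eqref{eq:revert_change_of_var} yields $\x^0 = (\mathbf{1}_N \otimes I_d)\xb^0$, meaning each $x_i^0$ equals $\xb^0$; the same identity applied to $\x^*$ closes the equivalence.

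Finally, Gram-representability is routine: the condition $\xp^0 = 0$ is equivalent to the single linear constraint $(\xp^0)^T\xp^0 = 0$ on $G_{\perp}$, and $\|\xb^0 - \xb^*\|_2^2 \le D^2$ expands into a linear combination of the scalar products $(\xb^0)^T\xb^0$, $(\xb^0)^T\xb^*$, $(\xb^*)^T\xb^*$, all entries of $G_{\paral}$. The only real pitfall is bookkeeping the $\sqrt{N}$ factors in the change of variable so that the common value of a consensus vector lands on $\xb$ with unit coefficient rather than being rescaled by $\sqrt{N}$; once this is tracked, the argument is a pure substitution.
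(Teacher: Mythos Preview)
Your proposal is correct and follows essentially the same approach as the paper: identify $x_i^0=x^0$ with $\x^0\in\C$ and invoke Proposition~\ref{prop:cons_cons} to get $\xp^0=0$, then read off $\xb^0=x^0$ and $\xb^*=x^*$ from the definition of the change of variable to transfer the norm bound. Your write-up is in fact more thorough than the paper's own proof, which leaves the converse direction and the explicit Gram-representability argument implicit; your extra bookkeeping of the $\sqrt{N}$ factors and the expansion of the constraints as entries of $G_\paral$ and $G_\perp$ are welcome but do not constitute a different route.
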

\begin{proof}
  All the agents start with the same $x_i^0$, which means that $\x^0 \in \C$ and thus $\xp^0=0$, see Proposition \ref{prop:cons_cons}. We also have, by definition, that $\xb^0 = x^0$ and $\xb^* = x^*$. These conditions are linearly Gram-representable since they involve linear combinations of scalar products of variables.
\end{proof} \medskip

\begin{proposition} \label{prop:init1}
  The initialization of variables $x_i^0 \in \Rvec{d}$, for $i=1,\dots,N$ such that \vspace{-1mm}
  \begin{equation} \label{eq:init1}
    \frac{1}{N} \sum_{i=1}^N \|x_i^0 - x^*\|_2^2 \le D^2, \vspace{-1mm}
  \end{equation}
  where $x^* \in \Rvec{d}$ is an optimal solution of \eqref{opt:dec_prob}, is equivalent to the following linearly Gram-representable constraints
  \begin{equation} \label{eq:init1_b}
    \|\bx^0 -\, \bx^*\|_2^2 = \|\xb^0 - \xb^*\|_2^2 + \|\xp^0 - \xp^*\|_2^2 \le D^2,
\end{equation}
  where $\bx^* \in \Rvec{Nd}$ is an optimal solution of \eqref{opt:gen_prob_b}.
\end{proposition}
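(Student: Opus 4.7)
The plan is to show the claimed identity by rewriting the sum in $\x$-coordinates, exploiting the orthogonality of the transformation $Q$, and then splitting the resulting norm into its consensus and orthogonal components. The key idea is that the left-hand side of \eqref{eq:init1} is nothing but $\frac{1}{N}\|\x^0 - \x^*\|_2^2$ where $\x^0 = (x_1^0,\dots,x_N^0)$ and $\x^* = \mathbf{1}_N \otimes x^*$, and the change of variable \eqref{eq:revert_change_of_var} relates this to $\|\bx^0 - \bx^*\|_2^2$ via a factor that cancels with the $\frac{1}{N}$.

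More concretely, I would first observe that stacking the local vectors gives
$$\frac{1}{N}\sum_{i=1}^N \|x_i^0 - x^*\|_2^2 = \frac{1}{N}\|\x^0 - \x^*\|_2^2,$$
where $\x^* \in \Rvec{Nd}$ is the vector obtained by stacking $N$ copies of $x^* \in \Rvec{d}$. Since all entries of $\x^*$ coincide, $\x^* \in \C$, and using Definition \ref{def:change_of_var} one finds $\xb^* = x^*$ and $\xp^* = 0$, so $\x^*$ corresponds through \eqref{eq:revert_change_of_var} precisely to the optimal solution $\bx^*$ of \eqref{opt:gen_prob_b} given in the statement.

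Next I would apply \eqref{eq:revert_change_of_var} to both $\x^0$ and $\x^*$ to get $\x^0 - \x^* = \sqrt{N}\,Q(\bx^0 - \bx^*)$. Since $Q$ is orthogonal, taking norms yields $\|\x^0 - \x^*\|_2^2 = N\,\|\bx^0 - \bx^*\|_2^2$, and the factor $N$ cancels with the $\frac{1}{N}$, proving the equality
$$\frac{1}{N}\sum_{i=1}^N \|x_i^0 - x^*\|_2^2 = \|\bx^0 - \bx^*\|_2^2.$$
The further decomposition $\|\bx^0 - \bx^*\|_2^2 = \|\xb^0 - \xb^*\|_2^2 + \|\xp^0 - \xp^*\|_2^2$ is immediate from the block structure of $\bx$ in Definition \ref{def:change_of_var}. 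The constraint is then linearly Gram-representable because it is a linear combination of scalar products of the components $\xb^0, \xb^*, \xp^0, \xp^*$, which belong to the blocks of $G_{\paral}$ and $G_{\perp}$ introduced in the PEP formulation.

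I do not anticipate a serious obstacle: the argument is essentially a direct computation relying on the orthogonality of $Q$ and on the identification of $\bx^*$ with the image of the consensus vector $\mathbf{1}_N \otimes x^*$ under the change of coordinates. The only point requiring a bit of care is verifying that $\xb^* = x^*$, which follows from $\frac{1}{\sqrt{N}}\Qb^T(\mathbf{1}_N\otimes x^*) = \frac{1}{N}(\mathbf{1}_N^T \otimes I_d)(\mathbf{1}_N \otimes x^*) = x^*$, together with $\xp^* = 0$ because $\Qp$ is a basis of the orthogonal complement of $\C$.
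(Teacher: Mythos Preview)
Your proposal is correct and follows essentially the same approach as the paper's own proof: rewrite the sum as $\frac{1}{N}\|\x^0-\x^*\|_2^2$, apply the change of variables \eqref{eq:revert_change_of_var} together with the orthogonality of $Q$ to obtain $\|\bx^0-\bx^*\|_2^2$, and then split into consensus and orthogonal parts. Your write-up is in fact more detailed than the paper's, in particular where you explicitly verify $\xb^*=x^*$ and $\xp^*=0$.
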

\begin{proof}
  Using change of variables \eqref{eq:revert_change_of_var}, the initial condition \eqref{eq:init1} becomes \vspace{-2mm}
  \begin{equation}
    \frac{1}{N} \sum_{i=1}^N \|x_i^0 - x^*\|_2^2 = \frac{1}{N} \|\x^0 -\x^*\|_2^2 = \|\bx^0 -\, \bx^*\|_2^2 \le D^2. \vspace{-1mm}
  \end{equation}
  We then obtain \eqref{eq:init1_b} by definition of the Euclidean norm. This condition is linearly Gram-representable since it involves a linear combination of scalar products of variables.
\end{proof}
This last example can also apply for metric $\sum_{i=1}^N \|x_i^0 - x^*\|_2^2$, but in that case, $N$ would appear as a scaling parameter for the formulation in basis $b$.

We now have all the ingredients to build a PEP formulation that can evaluate bounds on the worst-case performance of any decentralized algorithm under consideration (see Section \ref{sec:algo}) to solve problem \eqref{opt:dec_prob2} for different classes of functions ($\mc{F}_R$ and $\mc{F}_{\mu,L}$), and whose size is independent of the number of agents $N$. The bounds are valid for any averaging matrices that is symmetric, generalized doubly-stochastic and with the given range of eigenvalues.
The obtained bounds may scale with $N$ as a parameter but we can choose the initial conditions and the performance criterion to obtain bounds totally independent of $N$.

\section{Numerical results} \label{sec:numres}
\subsection{Comparison with the previous PEP formulation \cite{PEP_dec}}

\begin{figure*}[h]
  \centering \hspace{-3mm}
    \begin{subfigure}{0.5\textwidth}
        \includegraphics[width=\textwidth]{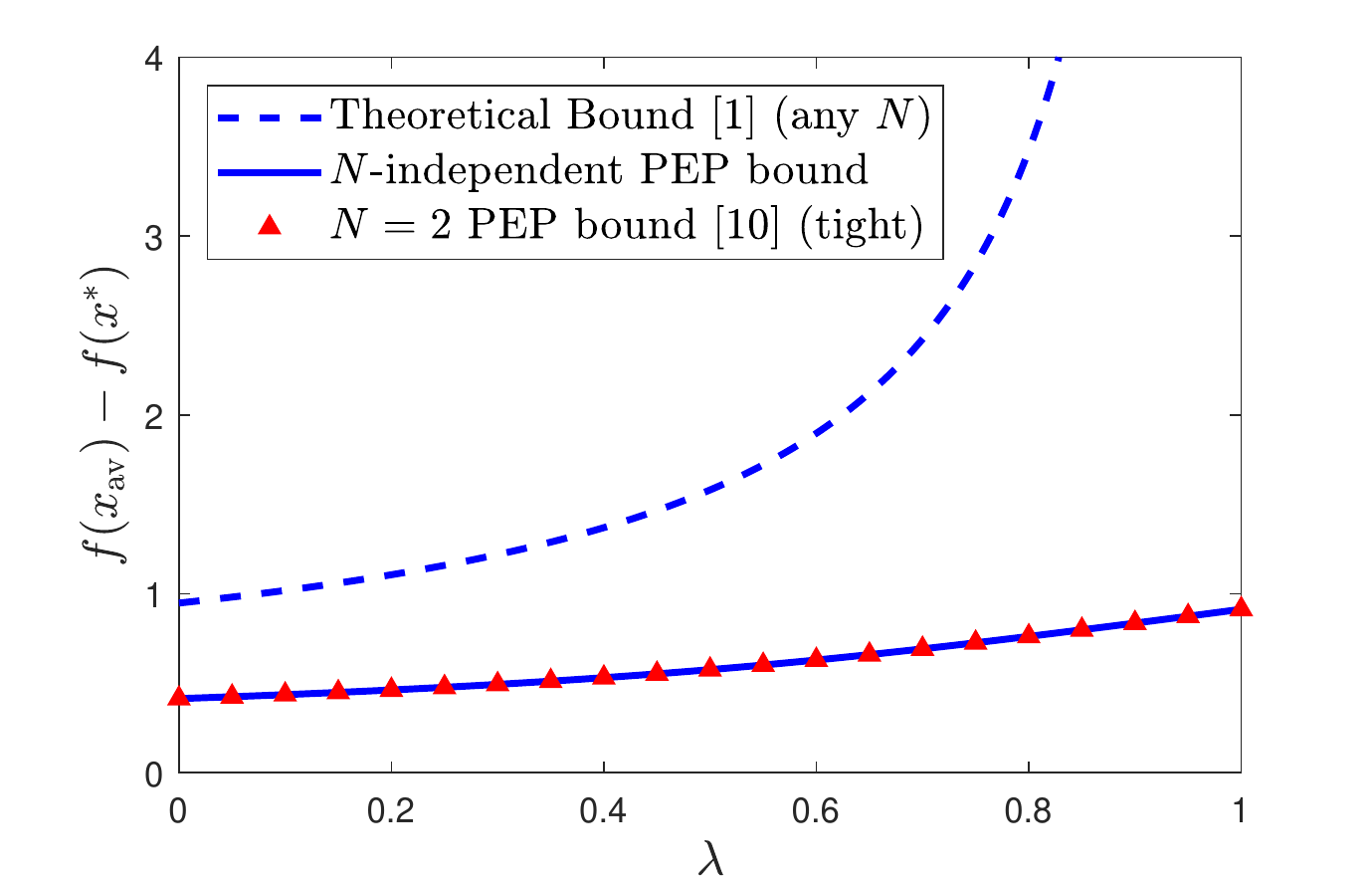}
        \caption{DGD}
        \label{fig:DGD}
    \end{subfigure} \hspace{-3mm}
    \begin{subfigure}{0.5\textwidth}
      \includegraphics[width=\textwidth]{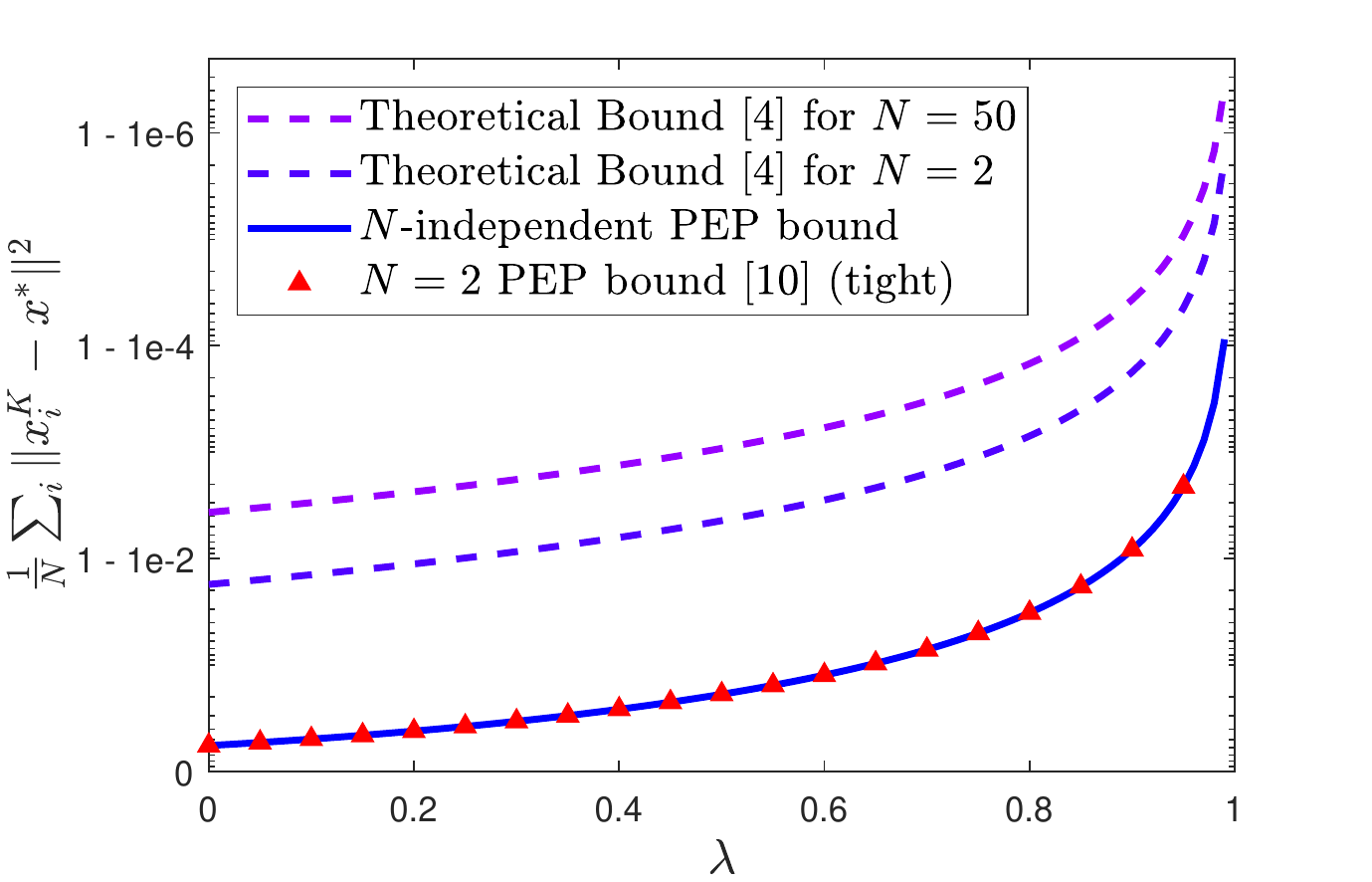}
        \caption{DIGing}
        \label{fig:DIGing}
    \end{subfigure}
    \caption{Evolution with $\lambda$ of the worst-case performance of
    $K = 10$ iterations of DGD and DIGing. The agent-independent PEP bounds from our new formulation exactly match the PEP bounds for $N=2$ from previous PEP formulation \cite{PEP_dec} and improve on the theoretical guarantees.
    (a) DGD with $f_i \in \mc{F}_R$ for all $i$ ($R=1$) and $\alpha = \frac{1}{\sqrt{K}}$; (b) DIGing with $f_i \in \mc{F}_{\mu,L}$ for all $i$ ($\mu=0.1$, $L=1$) and optimized $\alpha$. \vspace{-5mm}}
  \label{fig:compDGD_DIGing}
\end{figure*}

We implement this $N$-independent PEP formulation for analyzing the performance of $K$ iterations of DGD and DIGing with constant step-size in the same settings as in \cite[Section IV]{PEP_dec}, to compare the results with both the previous $N$-\,dependent PEP formulation \cite{PEP_dec} and the theoretical guarantees (Fig. \ref{fig:compDGD_DIGing}).
We consider a class of averaging matrices that contains symmetric generalized doubly stochastic matrices, with a range of eigenvalues $\qty[-\lam,\lam]$ for $\lam \in [0,1)$ that holds for all eigenvalues, except for $\lam_1 = 1$.
Our framework allows for any performance criterion to be chosen, but we use the same as the theoretical guarantees we are comparing with.

For DGD, we analyze the worst-case of $f(\xmoy) - f(x^*)$, where $f(x) = \frac{1}{N}\sum_{i=1}^N f_i(x)$ is the function we minimize in the decentralized problem \eqref{opt:dec_prob}, $x^*$ is an optimum of $f$, and $\xmoy = \frac{1}{N(K+1)} \sum_{i=1}^N \sum_{k=0}^K x_i^k $ is the average iterates over all the iterations and all the agents.
This performance criterion has a theoretical guarantee \cite{DGD} that is shown in Fig. \ref{fig:DGD} for comparison. This guarantee applies to a step-size $\alpha = \frac{1}{\sqrt{K}}$ and local functions $f_i \in \mc{F}_R$ for all $i$, i.e. convex functions with subgradient norm bounded by $R$. The initial conditions are the ones described in Proposition \ref{prop:init2}. In our experiments, we set $R = 1$ and $D = 1$, but results for general values of $R$ and $D$ can be recovered by scaling.

Fig. \ref{fig:DGD} shows the evolution as a function of $\lam$ of the results obtained for $10$ iterations. We observe that the spectral bound obtained with our new agent-independent PEP formulation matches those obtained with the previous agent-dependent PEP formulation for different values of $N$ \cite{PEP_dec}. The independence of $N$ in this spectral bounds from \cite{PEP_dec} was observed but not guaranteed. However, the new bound is known to be valid for any value of $N$. This means that
\begin{enumerate}[(i)]
  \item The relaxation of the separability of the objective function that we have introduced with the generalized decentralized problem (Section \ref{sec:gen_prob}) has no impact on the PEP results for DGD in such settings.
  \item The agent-independent PEP formulation is numerically tight for DGD with symmetric generalized doubly stochastic averaging matrices since this is the case for the agent-dependent formulation \cite{PEP_dec}.
\end{enumerate}

\noindent The same observations also apply to DIGing whose results are shown in Fig. \ref{fig:DIGing}, for a different setting. We consider metric from Proposition \ref{prop:init1} as both initial condition and performance criterion, we suppose $f_i \in
\mc{F}_{\mu,L}$ for all $i$, i.e. local functions are $\mu$-strongly convex and $L$-smooth.
In our experiments, we set $D = 1$, $\mu = 0.1$ and $L=1$ but results for general values of $D$ and $L$ can be recovered by scaling. Other values of $\mu$ lead to similar observations.
Resulting PEP and theoretical bounds depend on the step-size $\alpha$. For each bound and each $\lam$, we separately choose the step-size leading to the smallest value.
For the spectral PEP formulations, the best values of $\alpha$ for $\mu=0.1$ are found empirically: $\alpha = 0.44(1-\lam)^2$.
Fig. \ref{fig:DIGing} uses a special log-scale for the y-axis, shifted by $1$, the initial value of the performance criterion.
We see that the theoretical guarantee from \cite{DIGing} for $N=2$ remains close to 1 for all values of $\lam$ and this gets worst for larger $N$. This is not the case for our spectral agent-independent PEP bound which improves on the theoretical one.

\subsection{Comparison of DIGing and EXTRA}
Our framework enables easy comparison of algorithms based on their worst-case performance bounds. These upper bounds, and thus the resulting comparisons are valid over the entire given spectral class of networks, for all network sizes.
For example, Fig. \ref{fig:algo_comp} compares DIGing \cite{DIGing} and EXTRA \cite{EXTRA} performance after 10 iterations, in the same setting as in Fig. \ref{fig:DIGing}: we consider metric from Proposition \ref{prop:init1} both as initial condition and performance criterion and $f_i \in \mc{F}_{\mu,L}$ for all $i$.
In our experiments, we set $D = 1$, $\mu = 0.1$ and $L=1$ but results for general values of $D$ and $L$ can be recovered by scaling. We also consider the class of symmetric generalized doubly stochastic averaging matrices with range of eigenvalues $\qty[-\lam,\lam]$.
The algorithm EXTRA \cite{EXTRA} uses a second averaging matrix $W_2$ which the authors recommend to set to $W_2 = (I+W)/2$. Consensus steps using $W_2$ are thus considered as $W_2 x = \frac{x}{2} + \frac{Wx}{2} $.\\
We compare the two algorithms in two different cases: \vspace{-0.5mm}
\begin{itemize}
  \item \textbf{Constant averaging matrices}. It means that the same matrix is used for each consensus. In that case, the conditions from Theorem \ref{thm:conscons} are applied in PEP to all consensus steps at once.
  \item \textbf{Time-varying averaging matrices}. It means that each consensus step can potentially uses a different matrix from the same spectral class. In that case, the conditions from Theorem \ref{thm:conscons} are applied separately to each consensus steps in PEP. Therefore, for each of these applications of Theorem \ref{thm:conscons}, $X$ and $Y$ have only one column and LMIs become scalar inequalities.
\end{itemize}
For each case, the step-size $\alpha$ is optimized for each value of $\lam$. In Fig. \ref{fig:algo_comp}, we observe that, for constant averaging matrices (plain lines),
the worst-case performance of EXTRA after 10 iterations is better than the one DIGing for all the values of $\lam$. However, EXTRA seems to be much less robust to time-varying matrices (dashed line). Indeed, in that case, DIGing has exactly the same performance as for constant matrices, while the performance of EXTRA deteriorates: it remains better than DIGing for small values of $\lam$ but explodes when $\lam$ increases. The robustness of DIGing to time-varying matrices was already known since its theoretical analysis \cite{DIGing}, despite its non-tightness (see Fig. \ref{fig:DIGing}), is valid for time-varying matrices. However, the worst-case behavior of EXTRA for time-varying matrices was unknown since its theoretical analysis \cite{EXTRA} only focuses on constant matrices.

\begin{figure}[b]
  \vspace{-6mm}
  \centering
  \includegraphics[width=0.5\textwidth]{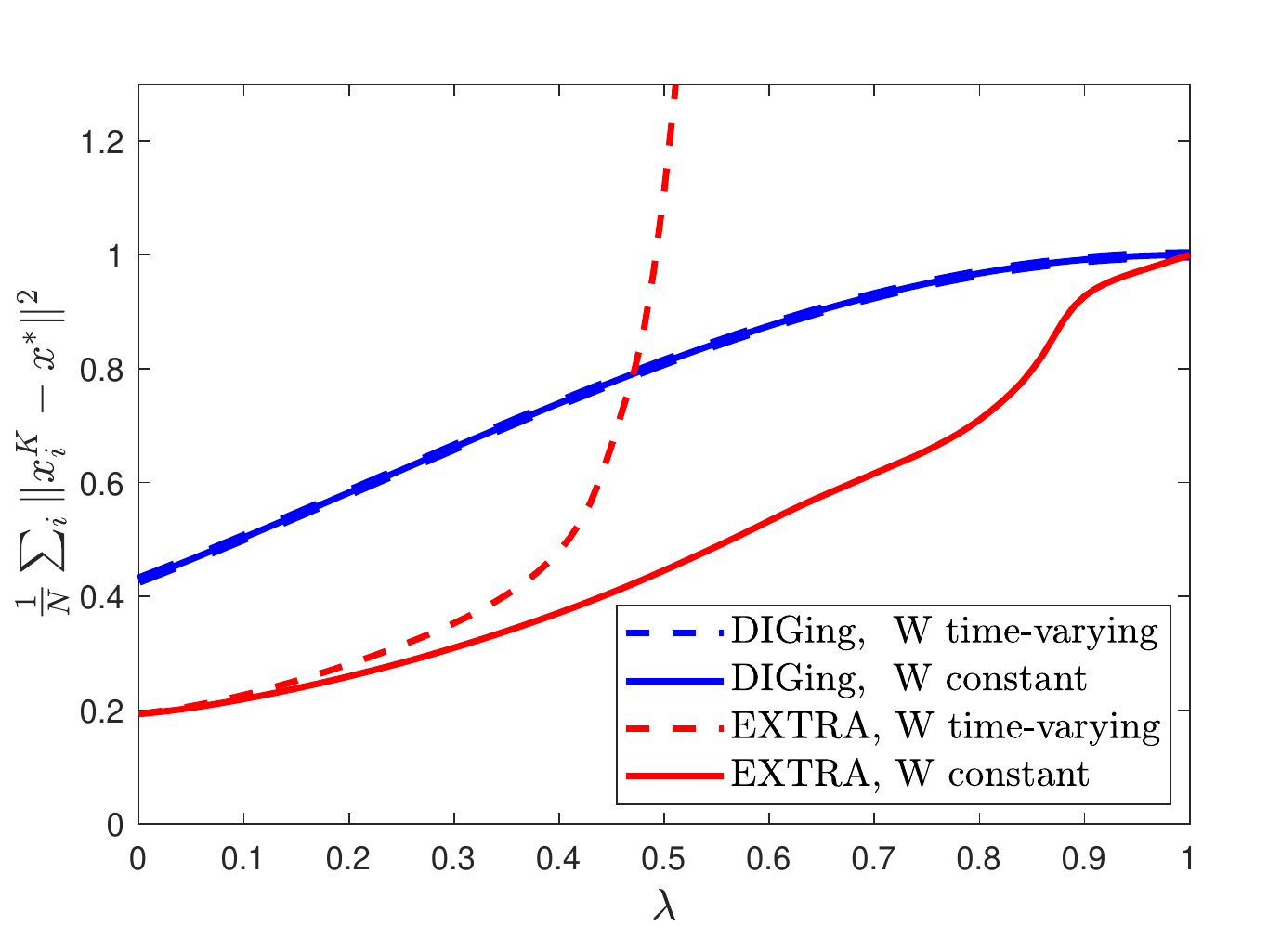}
  \caption{Comparison between DIGing and EXTRA for $10$ iterations with constant and time-varying averaging matrices. Evolution is shown with the range of eigenvalues $\qty[-\lam,\lam]$. Optimized $\alpha$ and $f_i \in \mc{F}_{\mu,L}$ for all $i$ ($\mu=0.1$, $L=1$).}
  \label{fig:algo_comp}
\end{figure}

Note that these observations hold after 10 iterations but similar ones were made for larger number of iterations. However, the value of $\lambda$ where dashed lines cross may slightly vary.
Our observations are consistent with the fact that the alternative IQC methodology \cite{IQC_dec} does not provide any convergence guarantee for EXTRA with similar settings when $\lam \ge 0.6$. Indeed, IQC evaluates the performance over one iteration of the algorithm and can therefore only consider the case where the averaging matrices are time-varying.

\section{Conclusion}
We develop a methodology to automatically compute numerical worst-case bounds of a broad class of decentralized optimization methods that are independent of the number of agents in the problem. This methodology also allows considering time-dependent settings. This opens the way for computer-aided analysis of many decentralized algorithms, which could lead to improvements in their performance guarantees and parameter tuning.
Moreover, the guarantees computed with our tool appear to be tight in many cases.

This methodology is based on the approach of performance estimation problem (PEP) and reuses results from our previous work to represent spectral classes of matrices. Although it formulates a relaxed problem, it appears to provide tight worst-case bounds for DGD and DIGing, which are valid for any number $N$ of agents and improve on the theoretical guarantees.

\bibliographystyle{IEEEtran}
\bibliography{refs.bib}

\begin{thebibliography}{10}
\providecommand{\url}[1]{#1}
\csname url@samestyle\endcsname
\providecommand{\newblock}{\relax}
\providecommand{\bibinfo}[2]{#2}
\providecommand{\BIBentrySTDinterwordspacing}{\spaceskip=0pt\relax}
\providecommand{\BIBentryALTinterwordstretchfactor}{4}
\providecommand{\BIBentryALTinterwordspacing}{\spaceskip=\fontdimen2\font plus
\BIBentryALTinterwordstretchfactor\fontdimen3\font minus
  \fontdimen4\font\relax}
\providecommand{\BIBforeignlanguage}[2]{{%
\expandafter\ifx\csname l@#1\endcsname\relax
\typeout{** WARNING: IEEEtran.bst: No hyphenation pattern has been}%
\typeout{** loaded for the language `#1'. Using the pattern for}%
\typeout{** the default language instead.}%
\else
\language=\csname l@#1\endcsname
\fi
#2}}
\providecommand{\BIBdecl}{\relax}
\BIBdecl

\bibitem{DGD}
A.~{Nedić}, A.~{Olshevsky}, and M.~G. {Rabbat}, ``Network topology and
  communication-computation tradeoffs in decentralized optimization,''
  \emph{Proceedings of the IEEE}, vol. 106, no.~5, pp. 953--976, 2018.

\bibitem{DsubGD}
A.~Nedic and A.~Ozdaglar, ``Distributed subgradient methods for multi-agent
  optimization,'' \emph{IEEE Transactions on Automatic Control}, vol.~54, pp.
  48 -- 61, 02 2009.

\bibitem{EXTRA}
W.~Shi, Q.~Ling, G.~Wu, and W.~Yin, ``Extra: An exact first-order algorithm for
  decentralized consensus optimization,'' \emph{SIAM Journal on Optimization},
  vol.~25, 04 2014.

\bibitem{DIGing}
A.~Nedic, A.~Olshevsky, and W.~Shi, ``Achieving geometric convergence for
  distributed optimization over time-varying graphs,'' \emph{SIAM Journal on
  Optimization}, vol.~27, 07 2016.

\bibitem{NIDS}
Z.~Li, W.~Shi, and M.~Yan, ``A decentralized proximal-gradient method with
  network independent step-sizes and separated convergence rates,'' \emph{IEEE
  Transactions on Signal Processing}, vol.~PP, 04 2017.

\bibitem{PEP_Smooth}
A.~B. {Taylor}, J.~M. {Hendrickx}, and F.~{Glineur}, ``Smooth strongly convex
  interpolation and exact worst-case performance of first-order methods,''
  \emph{Mathematical Programming}, vol. 161, 02 2015.

\bibitem{PEP_composite}
------, ``Exact worst-case performance of first-order methods for composite
  convex optimization,'' \emph{SIAM Journal on Optimization}, vol.~27, 12 2015.

\bibitem{Taylor_thesis}
A.~Taylor, ``Convex interpolation and performance estimation of first-order
  methods for convex optimization,'' Ph.D. dissertation, UCLouvain, 2017,
  advisors : François Glineur and Julien M. Hendrickx.

\bibitem{PEP_dec_CDC}
S.~Colla and J.~M. Hendrickx, ``Automated worst-case performance analysis of
  decentralized gradient descent,'' in \emph{2021 60th IEEE Conference on
  Decision and Control (CDC)}, 2021, pp. 2627--2633.

\bibitem{PEP_dec}
------, ``Automatic performance estimation for decentralized optimization,''
  \emph{preprint}, 2022.

\bibitem{IQC}
L.~Lessard, B.~Recht, and A.~Packard, ``Analysis and design of optimization
  algorithms via integral quadratic constraints,'' \emph{SIAM Journal on
  Optimization}, vol.~26, 08 2014.

\bibitem{IQC_dec}
A.~Sundararajan, B.~V. Scoy, and L.~Lessard, ``Analysis and design of
  first-order distributed optimization algorithms over time-varying graphs,''
  \emph{IEEE Transactions on Control of Network Systems}, vol.~7, pp.
  1597--1608, 2020.

\bibitem{PEP_Drori}
Y.~Drori and M.~Teboulle, ``Performance of first-order methods for smooth
  convex minimization: A novel approach,'' \emph{Mathematical Programming},
  vol. 145, 06 2012.

\bibitem{OGM}
D.~Kim and J.~Fessler, ``Optimized first-order methods for smooth convex
  minimization,'' \emph{Mathematical Programming}, vol. 159, 06 2014.

\bibitem{ITEM}
A.~B. Taylor and Y.~Drori, ``An optimal gradient method for smooth strongly
  convex minimization,'' \emph{Mathematical Programming}, pp. 1--38, 2022.

\bibitem{PEP_compo}
A.~B. {Taylor}, J.~M. {Hendrickx}, and F.~{Glineur}, ``Exact worst-case
  convergence rates of the proximal gradient method for composite convex
  minimization,'' \emph{Journal of Optimization Theory and Applications}, vol.
  178, 08 2018.

\bibitem{PESTO}
------, ``Performance estimation toolbox ({PESTO}): Automated worst-case
  analysis of first-order optimization methods,'' in \emph{IEEE 56th Annual
  Conference on Decision and Control (CDC)}, 2017, pp. 1278--1283.

\bibitem{pepit2022}
B.~Goujaud, C.~Moucer, F.~Glineur, J.~Hendrickx, A.~Taylor, and A.~Dieuleveut,
  ``{PEPit}: computer-assisted worst-case analyses of first-order optimization
  methods in {P}ython,'' \emph{arXiv preprint arXiv:2201.04040}, 2022.

\end{thebibliography}

\end{document}